\date{}
\theoremstyle{plain}
\newtheorem{theorem}{Theorem}[section]
\newtheorem{lemma}[theorem]{Lemma}
\newtheorem{proposition}[theorem]{Proposition}
\newtheorem{corollary}[theorem]{Corollary}
\newtheorem{question}[theorem]{Question}
\theoremstyle{definition}
\newtheorem{remark}[theorem]{Remark}
\newcommand{\Z}{\mathbb{Z}}
\newcommand{\R}{\mathbb{R}}
\newcommand{\La}{\Lambda}
\newcommand{\Br}{\operatorname{Br}}
\newcommand{\tb}{\operatorname{tb}}
\newcommand{\rot}{\operatorname{rot}}
\newcommand{\Symp}{\operatorname{Symp}}
\newcommand{\st}{\text{st}}
\newcommand{\dfn}[1]{{\textbf{#1}}}
\begin{document}

	\title{Exact Lagrangian fillability of 3-braid closures}
	
	\author{James Hughes}
\address{Duke University, Dept. of Mathematics, Durham, NC 27708, USA}
\email{james.hughes@duke.edu}
\author{Jiajie Ma}
\address{Duke University, Dept. of Mathematics, Durham, NC 27708, USA}
\email{jason.ma@duke.edu}
\begin{abstract} We determine when a Legendrian quasipositive 3-braid closure in the standard contact $\mathbb{R}^3$ admits an orientable or non-orientable exact Lagrangian filling. Our main result provides evidence for the orientable fillability conjecture of Hayden and Sabloff, showing that a 3-braid closure is orientably exact Lagrangian fillable if and only if it is quasipositive and the HOMFLY bound on its maximum Thurston-Bennequin number is sharp. Of possible independent interest, we construct explicit Legendrian representatives of quasipositive 3-braid closures with maximum Thurston-Bennequin number.
    
\end{abstract}

\subjclass[2020]{53D12; 53D10} 
\keywords{}

	\maketitle

\section{Introduction}
Legendrian links and their exact Lagrangian fillings are central to the study of low-dimensional contact and symplectic topology \cite{Chantraine10,HaydenSabloff,BourgeoisSabloffTraynor15}.
Following earlier works \cite{EHK,YuPan,STZ_ConstrSheaves,STWZ}, recent developments have significantly advanced the construction and classification of orientable exact Lagrangian fillings of Legendrian links in the standard contact $\R^3$ \cite{CasalsGao,CasalsZaslow,ABL22,Hughes2021,CasalsNg,CasalsWeng,CGGLSS, CasalsGao23},
especially for $0$- and $(-1)$-framed closures of positive braids; see Figure~\ref{fig: rainbow} for corresponding Legendrian representatives. Concurrently, the study of non-orientable exact Lagrangian fillings has increasingly emerged as a companion to these advances \cite{CCPRSY, capovillasearle2023newton, CSHW2023}. While the works cited above represent significant progress towards understanding exact Lagrangian fillings for certain families of Legendrian links, it is not yet known exactly which smooth link types have Legendrian representatives in $\R^3$ with exact Lagrangian fillings, orientable or not. Such a link is termed \dfn{orientably} or \dfn{nonorientably exact Lagrangian fillable}, respectively.

Orientable fillability is closely related to the hierarchy of positivity in smooth knot theory. In particular, \cite{BoileauOrevkov01} and \cite{MR1362834} together imply that an orientably fillable link is at least quasipositive, and being positive \cite{HaydenSabloff}---or almost positive in some cases \cite{Tagami19}---is a sufficient condition for fillability.\footnote{For some families of knots, positivity is also a necessary condition for orientable fillability; see \cite{LipmanSabloff} for the case of 4-plat links.} However, not all (strongly) quasipositive links are orientably fillable \cite{HaydenSabloff}. Another necessary condition for orientable fillability is that the HOMFLY polynomial bound on the maximum Thurston-Bennequin number of an orientably fillable link must be sharp \cite{HaydenSabloff}; see Section~\ref{ssec:links_background}. Hayden and Sabloff conjectured that this condition together with quasipositivity characterizes all orientably fillable links \cite[Conjecture 1.3]{HaydenSabloff}. 

In comparison, non-orientable fillability is more subtle. On one hand, the Kauffman polynomial bound on the maximum Thurston-Bennequin number of a non-orientably fillable link is sharp \cite{CCPRSY}, providing an analogue of the HOMFLY bound appearing in the orientable case. On the other hand, the relationship between non-orientable fillability and the positivity hierarchy is more obscure and no conjecture exists about sufficient conditions for non-orientable fillability. While positive links are not decomposably non-orientably fillable, there are links that realize every combination of quasipositivity and non-orientable fillability, and not all negative links are non-orientably fillable \cite{CCPRSY}. There are also examples of non-orientably fillable knots that are neither quasipositive nor negative \cite{CCPRSY}.

In this paper, we answer the exact Lagrangian fillability problem for ($0$-framed) closures of quasipositive $3$-braids. Combined with previous results, it gives a complete orientable fillability classification of closures of $3$-braids. Let $\hat{\beta}$ be the smooth $0$-framed closure of a braid $\beta\in \Br_3$. Denote by $\Delta = \sigma_1\sigma_2\sigma_1$ the Garside element of $\Br_3$. Using the Garside normal form of $\beta$, up to conjugation, we can write
\begin{equation}
    \label{eq:garside-normal}
    \beta=\sigma_1^{k_1}\sigma_2^{k_2}\dots \sigma_1^{k_{r-1}}\sigma_2^{k_r}\Delta^m,
\end{equation} 
where $k_r=0$ if and only if $m$ is odd, and otherwise $k_i\geq 2$ for all $i$. Further, $m$ is uniquely determined in this form; 
see Section~\ref{ssec:3-braids} for more details. The following result characterizes orientable fillability of $\hat{\beta}$.

\begin{theorem}\label{thm: fillability}
    The closure $\hat{\beta}$ of a $3$-braid $\beta$ is orientably exact Lagrangian fillable if and only if $\hat{\beta}$ is quasipositive and $m\geq -2$.
\end{theorem}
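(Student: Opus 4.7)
The plan is to prove the two directions separately. The only-if direction will combine the Boileau-Orevkov theorem (which forces quasipositivity of $\beta$) with the Hayden-Sabloff necessary condition that the HOMFLY bound on $\overline{\tb}$ must be sharp. The if direction will proceed by explicitly constructing a Legendrian representative of $\hat{\beta}$ attaining the HOMFLY bound and then exhibiting a pinch-move decomposition that produces an orientable exact Lagrangian filling.

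For the only-if direction, quasipositivity of $\beta$ is immediate from the cited results. To derive $m \geq -2$, I would invoke Murasugi's classification of 3-braid conjugacy classes via the Garside normal form (\ref{eq:garside-normal}), which makes $m$ an invariant of $\beta$, and then compute the HOMFLY bound on $\overline{\tb}(\hat\beta)$ directly from the normal form data $(k_1,\dots,k_r,m)$. The goal is to show that when $m \leq -3$, the HOMFLY bound drops strictly below any $\tb$ achievable by a Legendrian representative of $\hat\beta$, violating Hayden-Sabloff's sharpness condition and hence obstructing orientable fillability.

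For the if direction, I would split into cases according to $m$. When $m \geq 0$, the braid $\beta$ is conjugate to a positive braid (since $\Delta^{2}$ and each $\sigma_i^{k_i}$ with $k_i\geq 2$ are positive words), so $\hat\beta$ is orientably fillable by Hayden-Sabloff. When $m \in \{-1,-2\}$, the braid $\beta$ is not positive, but quasipositivity supplies a factorization of $\beta$ as a product of conjugates of positive generators that can be leveraged to extract a fillable structure. Following the construction announced in the abstract, I would first build an explicit Legendrian front for $\hat\beta$ whose $\tb$ matches the HOMFLY bound, treating the positive syllables $\sigma_j^{k_i}$ via a rainbow-style front and absorbing the $\Delta^{-1}$ or $\Delta^{-2}$ factors into a local front modification dictated by the quasipositive factorization. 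An orientable filling is then obtained by performing one pinch move at each positive crossing outside the rainbow and capping the resulting maximum-$\tb$ Legendrian unlink by the standard Lagrangian disks.

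The hard part will be the construction in the case $m = -2$: the negative full twist must be compensated by the positive syllables in a way that simultaneously certifies quasipositivity and admits a Legendrian realization whose $\tb$ equals the HOMFLY bound. I expect this to require a detailed case analysis on the shape of the Garside normal form and on which quasipositive factorizations are compatible with a given pattern of $k_i$'s. A secondary subtlety is verifying that the resulting pinch-move sequence produces an orientable (rather than non-orientable) filling, which requires carefully tracking orientations throughout the decomposition.
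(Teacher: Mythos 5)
Your high-level architecture (necessity from Boileau--Orevkov plus the Hayden--Sabloff HOMFLY-sharpness obstruction; sufficiency by explicit construction split on $m$) matches the paper, but the obstruction half of your plan has a genuine gap. First, the inequality you aim for is stated backwards: the Fuchs--Tabachnikov bound $\tb(\La)+|\rot(\La)|\leq -d_{P_K}$ means the HOMFLY bound can never drop below an achievable $\tb$; what must be shown for $m\leq -3$ is that $\overline{\tb}(\hat\beta)$ falls \emph{strictly below} $-d_{P_K}$. More importantly, even with the direction corrected, your plan requires two computations you give no mechanism for: the value of $-d_{P_K}$ from the normal form data, and the value of $\overline{\tb}(\hat\beta)$ itself (you need both a representative achieving it and a certificate that nothing does better). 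The paper sidesteps the HOMFLY computation entirely: it builds a high-rotation-number representative $\La_0(\beta)$ and applies Fuchs--Tabachnikov to it to get a \emph{lower} bound $-d_{P_K}\geq \tb(\La_0)+|\rot(\La_0)|$, then iteratively destabilizes to a representative $\La_k(\beta)$ that is certified to be max-tb by exhibiting a normal ruling (via Rutherford's theorem), and finally checks $\tb(\La_k)+|\rot(\La_k)|<\tb(\La_0)+|\rot(\La_0)|$. Without some substitute for this comparison-of-representatives trick, or an actual HOMFLY computation for all quasipositive $3$-braid closures together with an independent determination of $\overline{\tb}$, your only-if direction does not close.

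On the if direction, the $m\geq 0$ and $m=-1$ cases are fine in outline (the latter is exactly the $(-1)$-closure construction of \cite{CGGLSS}), but for $m=-2$ you correctly identify the hard step without supplying the idea that makes it work. The paper's route is: Orevkov's criterion forces $r\geq 4$ in the normal form when $m=-2$ and $\beta$ is quasipositive; one then pinches crossings to reduce any such $\beta$ to $\sigma_1^2\sigma_2^2\sigma_1^2\sigma_2^2\Delta^{-2}$, and two further pinches turn that braid into $\Delta^2\Delta^{-2}$, i.e.\ a $3$-component max-tb unlink, which is capped by disks. The ``detailed case analysis on quasipositive factorizations'' you anticipate is not actually needed once you have this single target braid and the lower bound on $r$; conversely, without identifying such a target your construction remains a sketch rather than a proof.
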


   \begin{center}
    \begin{figure}[h!]{ \includegraphics[width=.7\textwidth]{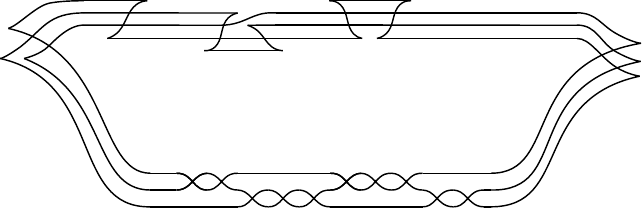}}\caption{Max-tb Legendrian representative of the closure of $\sigma_1^3\sigma_2^2\sigma_1^2\sigma_2^2\sigma_1^3\sigma_2^3\sigma_1^2\Delta^{-7}$. This knot has no orientable exact Lagrangian fillings, but is non-orientably fillable.}
    \label{fig: example_front}\end{figure}
\end{center}

By our discussion above, it suffices to consider the case when $\hat{\beta}$ is quasipositive and $m \leq -1$. The case when $m = -1$ is  covered in \cite[Section 4.3]{CGGLSS}. For $m \leq -1$, we exploit Orevkov's characterization \cite{Orevkov04} of quasipositive $3$-braids to provide explicit max-tb Legendrian representatives $\La(\beta)$ of $\hat{\beta}$; see Figure~\ref{fig: example_front} for an example. As rainbow closures of positive braids are max-tb Legendrian representatives, our construction allows us to characterize the maximum Thurston-Bennequin number of all quasipositive $3$-braids. In more detail, we have the following corollary:

\begin{corollary}\label{cor:max-tb}
    Let $\hat{\beta}$ be the closure of a quasipositive $3$-braid $\beta$ of the form given in Equation~\ref{eq:garside-normal}. The maximum Thurston-Bennequin number of $\hat{\beta}$ is given by
    \begin{equation*}
        \overline{\tb}(\hat{\beta}) = \begin{cases}
            |\beta| - 3 & m \ge -1, \\
             |\beta| + m-1 & m \leq  -2, 
        \end{cases}
    \end{equation*}
    where $|\beta|$ is the signed word length of $\beta$. 
\end{corollary}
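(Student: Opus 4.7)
The plan is to match a lower bound coming from the explicit Legendrian representative $\Lambda(\beta)$ constructed in the proof of Theorem~\ref{thm: fillability} with a case-dependent upper bound obtained from classical knot invariants.

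For the lower bound, I would compute $\tb(\Lambda(\beta))$ directly from its front diagram using $\tb = w - c$ (writhe minus the number of right cusps). In the case $m \geq -1$, the construction reduces, up to Legendrian isotopy, to a rainbow-style closure of a positive braid conjugate to $\beta$, so the writhe equals $|\beta|$ and there are exactly three right cusps, giving $\tb(\Lambda(\beta)) = |\beta| - 3$. In the case $m \leq -2$, Orevkov's characterization of quasipositive $3$-braids forces certain modifications to the front (either additional cusps or compensations between positive and negative generators); summing the $\tb$ contributions over these modifications yields $\tb(\Lambda(\beta)) = |\beta| + m - 1$. This step is diagrammatic bookkeeping once the explicit Legendrian fronts from the proof of Theorem~\ref{thm: fillability} are in hand.

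For the upper bound when $m \geq -2$, Theorem~\ref{thm: fillability} guarantees that a max-tb representative of $\hat{\beta}$ admits an orientable exact Lagrangian filling $L$. Chantraine's inequality gives $\tb \leq -\chi(L) - 1 \leq 2 g_4(\hat{\beta}) - 1$, while Rudolph's theorem for quasipositive braids identifies $g_4(\hat{\beta}) = (|\beta|-2)/2$ via the Bennequin surface. Combining these with the slice-Bennequin inequality forces $\overline{\tb}(\hat{\beta}) \leq |\beta| - 3$, which agrees with both branches of the formula at $m = -2$.

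For $m \leq -3$, the Lagrangian filling argument is unavailable by Theorem~\ref{thm: fillability}, and the slice-Bennequin bound $|\beta| - 3$ is not sharp. I would instead appeal to the Franks-Williams-Morton (HOMFLY) bound $\overline{\tb}(\hat{\beta}) \leq -\deg_v P_{\hat{\beta}} - 1$. For a $3$-braid in the Garside normal form of Equation~\ref{eq:garside-normal}, the minimum $v$-degree of the HOMFLY polynomial can be computed by a skein-theoretic induction, or extracted from existing $3$-braid HOMFLY computations, yielding the bound $|\beta| + m - 1$. I expect this HOMFLY computation to be the main obstacle; a viable approach is induction on $|m|$ via a Markov-style argument that tracks the effect of inserting a single $\Delta^{-1}$ factor into $\beta$, with the $m = -2$ case (already handled above) serving as the base.
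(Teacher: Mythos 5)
Your lower-bound strategy (exhibit explicit representatives and compute $\tb$ from the front) is the same as the paper's, and your upper bound for $m \ge -2$ via slice--Bennequin plus Rudolph's computation of the four-genus of quasipositive links is a legitimate alternative to the paper's route (the paper instead cites uniqueness of max-tb rainbow closures for $m\ge 0$, fillability for $m=-1$, and a normal ruling for $m=-2$); modulo the bookkeeping slip that the bound should read $\overline{\tb}(\hat{\beta}) \leq -\chi_4(\hat{\beta}) = |\beta|-3$, that part is fine.

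The case $m \leq -3$, however, contains a genuine error: the HOMFLY (Franks--Williams--Morton) bound cannot possibly yield the upper bound $|\beta|+m-1$, because for $m<-2$ that bound is provably \emph{not} sharp --- this is exactly the content of Proposition~\ref{prop:non-fillability}, and it is the mechanism behind the non-fillability half of Theorem~\ref{thm: fillability}. Concretely, the paper shows $\overline{\tb}(\hat{\beta}) = |\beta|+m-1 < -d_{P_K}$, so your proposed skein-theoretic computation of $-\deg_v P_{\hat{\beta}}$ would return a number strictly larger than $|\beta|+m-1$ and the argument would not close. The missing idea is to use the \emph{Kauffman} polynomial bound instead, via unoriented normal rulings: by Rutherford's theorem (Corollary~\ref{cor: Ruling_TB}), any Legendrian admitting a normal ruling --- not necessarily oriented --- already maximizes $\tb$ in its smooth isotopy class. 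The paper's Proposition~\ref{prop: max-tb_ruling} constructs an explicit unoriented normal ruling on $\La_k(\beta)$, which simultaneously certifies $\overline{\tb}(\hat{\beta}) = \tb(\La_k(\beta)) = |\beta|+m-1$ and leaves room for the HOMFLY bound to fail. You should replace your HOMFLY step with such a ruling (or an equivalent Kauffman-degree computation); note also that your lower-bound ``diagrammatic bookkeeping'' for $m\le -2$ secretly requires the full iterated destabilization procedure of Section~\ref{sec:max-tb}, since the naive representative $\La_0(\beta)$ only has $\tb = |\beta| + \lfloor 3m/2\rfloor$.
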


Note that Rudolph's formula \cite[Section 3]{Rudolph93} gives a way to compute the maximal self-linking number of a quasipositive braid closure. However, in order to apply this formula to compute the maximal Thurston-Bennequin number, one needs to be able to determine the rotation number of a Legendrian representative realizing $\overline{tb}$.
When $m = -2$, we construct a decomposable orientable exact Lagrangian filling of $\La(\beta)$. When $m < -2$, we use $\La(\beta)$ to prove explicitly that the HOMFLY polynomial bound on $\overline{\tb}(\hat{\beta})$ is not sharp and thus, $\hat{\beta}$ is not orientably fillable.

Notably, when $m$ is even and less than $-2$, our construction of $\La(\beta)$ yields examples of non-fillable max-tb representatives with rotation number equal to zero.  
\begin{corollary}\label{cor:rot=0}
    There exist quasipositive max-tb Legendrian links with rotation number 0 that do not admit any orientable fillings.
\end{corollary}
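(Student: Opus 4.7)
The plan is to combine Theorem~\ref{thm: fillability}, which rules out orientable fillings whenever $m \leq -3$, with a direct rotation-number computation on the explicit max-tb representative $\La(\beta)$ built elsewhere in the paper. Concretely, I would fix a quasipositive $3$-braid $\beta$ in Garside normal form \eqref{eq:garside-normal} with $m$ even and $m \leq -4$, where Orevkov's characterization guarantees the existence of valid exponents $k_1, \dots, k_r$ making $\beta$ quasipositive. By Corollary~\ref{cor:max-tb} the Legendrian $\La(\beta)$ is then max-tb, and by Theorem~\ref{thm: fillability} its smooth type $\hat{\beta}$ admits no orientable exact Lagrangian filling, so all that remains is the vanishing of the rotation number.

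The main step is to read $\rot(\La(\beta))$ off the front diagram. Computing $\rot$ on each component as $\tfrac{1}{2}(D - U)$, with $D$ and $U$ counting down- and up-oriented cusps respectively, I expect the construction of $\La(\beta)$ to be invariant under reflection about a horizontal axis when $m$ is even: the Garside contribution $\Delta^m$ is then a power of the full twist $\Delta^2$ and can be drawn symmetrically, while the positive factor $\sigma_1^{k_1}\sigma_2^{k_2}\cdots$ admits a similarly symmetric presentation. Such a reflection interchanges up-cusps with down-cusps on each component, forcing $D = U$ componentwise and hence $\rot(\La(\beta)) = 0$. The parity restriction is essential here, since when $m$ is odd the extra half-twist in $\Delta^m = \sigma_1\sigma_2\sigma_1 \cdot \Delta^{m-1}$ breaks the reflection symmetry.

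The main obstacle is that this symmetry argument depends on the specific front convention used to construct $\La(\beta)$ in the later sections; if the chosen front is not manifestly reflection-symmetric, one would instead directly enumerate cusps in terms of the data $(k_i, m)$ and verify that the signed count vanishes on each component. Alternatively, one can exhibit a single explicit example---for instance, a variant of Figure~\ref{fig: example_front} with $\Delta^{-6}$ in place of $\Delta^{-7}$ and the $k_i$ adjusted to preserve quasipositivity via Orevkov's criterion---and verify $\rot(\La(\beta)) = 0$ by inspection, which already suffices for the existence statement in the corollary.
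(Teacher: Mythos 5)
Your logical skeleton matches the paper's: take $m$ even with $m\leq -4$, invoke Theorem~\ref{thm: fillability} (really Proposition~\ref{prop:non-fillability}) to kill orientable fillability of the smooth type, and then show the explicit max-tb representative has rotation number $0$. The gap is in how you propose to get that last vanishing. The max-tb front $\La_k(\beta)$ of Section~\ref{sec:max-tb} is \emph{not} invariant under reflection about a horizontal axis: it is produced from the naive front $\La_0(\beta)$ by a sequence of ZZS destabilizations whose zigzags all accumulate along the top of the diagram (see Figure~\ref{fig: example_front}), so the symmetry you want is destroyed precisely by the moves needed to reach maximal $\tb$. Worse, the symmetric front you describe --- draw the closure of $\beta_+\Delta^m$ with the full twists and the positive word arranged symmetrically and replace vertical tangencies by cusps --- is a representative of $\La_0(\beta)$ type, which for $m<-2$ has $\tb = |\beta|+\lfloor 3m/2\rfloor$, strictly below $\overline{\tb}(\hat\beta)=|\beta|+m-1$, and rotation number $-\lfloor 3(m+2)/2\rfloor$, which is \emph{nonzero} (equal to $-3(m+2)/2\geq 3$) for even $m\leq -4$. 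So even if a symmetry argument applied to that front, it would compute the rotation number of the wrong Legendrian and would not prove the corollary, since rotation number is not determined by the smooth type.

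The computation that is actually needed is a bookkeeping one: each ZZS destabilization (Figure~\ref{fig: DestabilizationMove}) raises $\tb$ by $1$ and shifts $\rot$ by $3$, and after the $k=-\lfloor (m+2)/2\rfloor$ destabilizations one gets $\rot(\La_k(\beta)) = -\lfloor 3(m+2)/2\rfloor + 3\lfloor (m+2)/2\rfloor$, which vanishes exactly when $m$ is even. The vanishing is an accident of this arithmetic, not a consequence of any manifest symmetry. Your fallback of exhibiting one explicit example (e.g.\ $m=-4$ or $m=-6$) and checking $\rot=0$ by inspection would indeed suffice for the existence statement, but to carry it out you must first construct the destabilized max-tb front rather than the symmetric closure diagram, and you have not done so; as written, the central step of the proof is not established.
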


Nonzero rotation number obstructs orientable fillability \cite{Chantraine10},  and to our knowledge, these are the first known examples where this obstruction vanishes but the HOMFLY bound is not sharp. A direct search of all quasipositive knots with 11 or fewer crossings via KnotInfo \cite{knotinfo} reveals only one example, namely $m(10_{155})$, which can be realized as the closure of the braid $\beta=\sigma_1^3\sigma_2^2\sigma_1^2\sigma_2^3\sigma_1^2\sigma_2^2\Delta^{-4}$.

As another corollary of Theorem~\ref{thm: fillability}, our result supports the Hayden-Sabloff conjecture.

\begin{corollary}
    The Hayden-Sabloff fillability conjecture holds for links of braid index $3$. Namely, every orientably fillable 3-braid closure is quasipositive and has sharp HOMFLY bound.
\end{corollary}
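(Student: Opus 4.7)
The plan is to leverage Theorem~\ref{thm: fillability} together with the HOMFLY-bound analysis for the explicit Legendrian representatives $\La(\beta)$ constructed in the body of the paper to establish both directions of the Hayden-Sabloff characterization for $3$-braid closures.

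The forward direction (orientably fillable implies quasipositive with sharp HOMFLY bound) holds in full generality and is recalled in the introduction: quasipositivity is due to \cite{BoileauOrevkov01,MR1362834} and sharpness of the HOMFLY bound is due to \cite{HaydenSabloff}. Both apply in particular to closures of $3$-braids, so nothing extra needs to be done for this implication.

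For the converse, suppose $\hat\beta$ is a quasipositive $3$-braid closure for which the HOMFLY bound on $\overline{\tb}(\hat\beta)$ is sharp, and write $\beta$ in the Garside normal form of Equation~(\ref{eq:garside-normal}) with integer exponent $m$. By Theorem~\ref{thm: fillability}, orientable fillability is equivalent to $m \ge -2$, so it suffices to rule out $m \le -3$. Assume to the contrary that $m \le -3$. Corollary~\ref{cor:max-tb} then gives $\overline{\tb}(\hat\beta) = |\beta|+m-1$, while the explicit HOMFLY computation performed in the body of the paper (as noted in the paragraph following Corollary~\ref{cor:max-tb}) shows that the HOMFLY bound on $\overline{\tb}(\hat\beta)$ strictly exceeds this value in this range. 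This contradicts the assumed sharpness, forcing $m \ge -2$ and hence orientable fillability via Theorem~\ref{thm: fillability}.

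The main obstacle is therefore wholly absorbed into the input from the main text: one must verify that, for $\beta$ in Garside normal form with $m \le -3$, the Franks-Williams-Morton bound coming from the HOMFLY polynomial of $\hat\beta$ is strictly larger than $|\beta|+m-1$. Given the explicit structure of $\La(\beta)$ and the skein recursion on the exponents $k_i$ of Equation~(\ref{eq:garside-normal}), this reduces to a concrete polynomial calculation over $3$-braid closures; once it is carried out, the corollary follows immediately by combining the two directions above with Theorem~\ref{thm: fillability}.
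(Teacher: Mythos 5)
Your overall skeleton matches the paper's: the forward implication is the general necessary-condition statement (Lemma~\ref{lem:orientable-obs}), and the converse reduces, via Theorem~\ref{thm: fillability}, to showing that a quasipositive $3$-braid closure with $m\le -3$ cannot have sharp HOMFLY bound. The one place you drift is your closing paragraph, where you say the remaining obstacle is a ``concrete polynomial calculation'' via a skein recursion on the exponents $k_i$. No such computation is performed in the paper, and none is needed: the proof of Proposition~\ref{prop:non-fillability} never touches the HOMFLY polynomial itself. Instead it applies the Fuchs--Tabachnikov inequality $\tb(\La)+|\rot(\La)|\le -d_{P_K}$ to the \emph{stabilized} representative $\La_0(\beta)$, whose rotation number $-\lfloor 3(m+2)/2\rfloor$ is large when $m<-2$, and compares with the bookkeeping of the ZZS destabilizations (each raises $\tb$ by $1$ but changes $\rot$ by $3$) to conclude $\overline{\tb}(\hat\beta)=\tb(\La_k(\beta))<\tb(\La_0(\beta))+|\rot(\La_0(\beta))|\le -d_{P_K}$. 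So the strict failure of sharpness for $m\le -3$ is already established in the paper by a rotation-number argument; if you route through an explicit Franks--Williams--Morton computation instead, you take on a genuinely harder (and unfinished) task for no gain. With that substitution your argument closes, and is then the same as the paper's.
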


\begin{remark}
One might hope to extend our methods above to characterize orientable fillability of links with higher braid index. The primary difficulty is that we rely heavily on Orevkov's characterization of quasipositivity for 3-braids; no such characterization is known for higher braid index. To our knowledge \cite{Orevkov15} represents the furthest progress towards understanding the case of 4-braids.
\end{remark}

\begin{center}
    \begin{figure}[h!]{ \includegraphics[width=.8\textwidth]{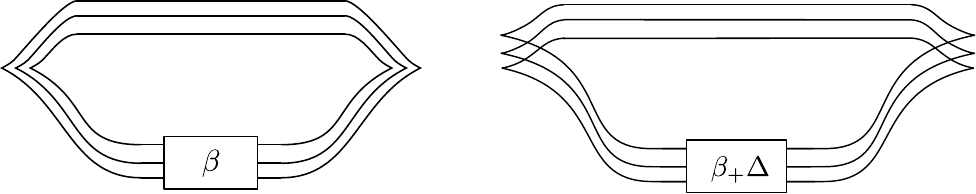}}\caption{Rainbow closure of a positive braid $\beta$ (left) and $(-1)$-closure of $\beta_+\Delta$ (right).}
    \label{fig: rainbow}\end{figure}
\end{center}

In the context of the recent work on the construction and classification of exact Lagrangian fillings of $(-1)$-closures of certain families of positive braids cited earlier, we wish to note that every orientably fillable max-tb representative we construct using our method is the $(-1)$-closure of a positive braid; see Figure \ref{fig: rainbow} (right) for a $(-1)$ closure of $\beta_+\Delta$. 
To our knowledge, this is true of
all existing examples of orientably fillable Legendrian links, including the ones studied in this paper, suggesting the following question:

\begin{question}\label{q:-1-closures?}
    Do there exist orientably fillable Legendrian links that are not $(-1)$-closures of positive braids?
\end{question}

This question has implications for the construction of Lagrangian fillings, as the method of Legendrian weaves, introduced for 2-braids in \cite{TreumannZaslow} and generalized to higher braid index in \cite{CasalsZaslow}, requires that the boundary link be given as the $(-1)$-closure of a positive braid. This method involves constructing Legendrian surfaces by combinatorially representing a certain class of Legendrian wavefront singularities, and taking the Lagrangian projection of the resulting Legendrian surface to obtain an exact Lagrangian filling. Answering Question~\ref{q:-1-closures?} in the negative would leave open the possibility that every orientably fillable Legendrian admits an exact Lagrangian filling constructed as a Legendrian weave.  A careful analysis of the fillings we construct in Section~\ref{sec: orientable-fill} reveals that this is the case for the fillings we construct in this work.

\begin{corollary}\label{cor: weaves}
Every orientably fillable 3-braid closure has at least one max-tb representative that admits an exact Lagrangian filling obtained as the Lagrangian projection of a Legendrian weave.
\end{corollary}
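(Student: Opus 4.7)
My plan is to use Theorem~\ref{thm: fillability} to restrict attention to quasipositive $\beta \in \Br_3$ with $m \geq -2$, and then to analyze the explicit max-tb representative $\Lambda(\beta)$ and its orientable filling constructed earlier in the paper, case-by-case in $m$.

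First I would verify that in each case $\Lambda(\beta)$ is Legendrian isotopic to the $(-1)$-closure of a positive 3-braid $\gamma_+$. For $m \geq 0$, the Garside normal form already expresses $\beta$ itself as a positive word, and the rainbow-closure presentation can be converted, via standard Legendrian Reidemeister moves, into a $(-1)$-closure of $\beta \cdot \Delta^{-1}$. For $m = -1$, this is already contained in \cite[Section 4.3]{CGGLSS}. For $m = -2$, the key case, Orevkov's characterization writes $\beta = \gamma_+ \Delta^{-2}$ with $\gamma_+$ positive, and the construction of $\Lambda(\beta)$ draws the Legendrian front directly as the $(-1)$-closure of $\gamma_+$, with the $\Delta^{-2}$ factor appearing as the closure arcs.

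Next I would recognize the orientable filling of $\Lambda(\beta)$ from Section~\ref{sec: orientable-fill} as the Lagrangian projection of a Legendrian weave. The filling is built as a decomposable saddle cobordism: a sequence of pinch moves at crossings of $\gamma_+$ reduces $\Lambda(\beta)$ to a standard Legendrian unlink, which is then capped off by embedded disks. By the Casals--Zaslow dictionary \cite{CasalsZaslow} (generalizing the 2-braid construction of \cite{TreumannZaslow}), any such pinching sequence on the $(-1)$-closure of a positive $n$-braid is encoded combinatorially by a trivalent planar graph on $n$ sheets, and the associated Legendrian weave has Lagrangian projection equal, up to Hamiltonian isotopy, to the given saddle-cobordism filling.

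The principal obstacle lies in the $m = -2$ case: one must verify that every pinch in the filling takes place at a crossing inherited from $\gamma_+$, since the weave correspondence breaks down if some pinch acts on a crossing created only after an intermediate Legendrian isotopy. A careful reading of the construction in Section~\ref{sec: orientable-fill} shows this is so, because $\Lambda(\beta)$ is presented from the outset as the $(-1)$-closure of $\gamma_+$ and each saddle cobordism acts directly on a crossing of that presentation. Once this compatibility is in place, the identification of the filling as a Legendrian weave is immediate from the Casals--Zaslow dictionary.
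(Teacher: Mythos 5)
Your overall strategy matches the paper's: reduce to $m\geq -2$ via Theorem~\ref{thm: fillability}, cite the weave constructions of \cite{CGGLSS} for $m\geq -1$, and for $m=-2$ identify the explicit filling of $\La_0(\beta)$ from Proposition~\ref{prop:m=-2_fillable} with a Legendrian weave via the Casals--Zaslow dictionary. Two points need repair, however. First, for $m\geq 0$ the rainbow closure of a positive braid $\beta$ is the $(-1)$-closure of $\beta\Delta^{2}$, not of $\beta\Delta^{-1}$: the $(-1)$-framing inserts a full negative twist $\Delta^{-2}$, so the $(-1)$-closure of $\beta\Delta^{-1}$ is smoothly the closure of $\beta\Delta^{-3}$, and $\beta\Delta^{-1}$ is not even a positive word, so the weave machinery would not apply to it. (For $m=-2$ your identification is correct: the $0$-closure of $\beta_+\Delta^{-2}$ is the $(-1)$-closure of $\beta_+$; note this is just the Garside normal form, not Orevkov's criterion, which only enters to guarantee quasipositivity.)

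Second, and more substantively, your treatment of the ``principal obstacle'' misdescribes both the construction and the weave dictionary. The filling in Proposition~\ref{prop:m=-2_fillable} is \emph{not} a bare pinching sequence on the crossings of $\beta_+$: after the initial pinches it interleaves Reidemeister III (braid) moves with further pinches, so some pinches occur at crossings that exist only after intermediate braid isotopies. This does not break the correspondence, because the weave formalism of \cite[Section 2]{CasalsZaslow} encodes braid moves as hexavalent vertices and pinches as trivalent vertices; the relevant requirement is only that all intermediate steps stay within the class of $(-1)$-closures of positive braid words, which they do here. Your claim that ``each saddle cobordism acts directly on a crossing of that presentation'' is therefore false as stated, and the condition you propose to verify is stronger than what is needed. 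Replacing it with the observation that the filling decomposes into braid isotopies (hexavalent vertices) and pinching cobordisms (trivalent vertices) --- which is exactly the paper's one-line argument --- closes the gap.
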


We believe that Corollary~\ref{cor: weaves} is of possible interest because Legendrian weaves provide a powerful tool for constructing and distinguishing exact Lagrangian fillings. Weaves are currently the best way of identifying what are known as $\mathbb{L}$-compressing cycles of a filling $L$, i.e. elements of $H_1(L)$ that bound Lagrangian disks in the complement $\mathbb{B}^4\backslash L$. These cycles are crucial ingredients in describing cluster structures coming from sheaf-theoretic or Floer-theoretic invariants of Legendrian links. From a cluster-theoretic perspective, the class of 3-braid closures is of particular interest as it contains the smallest known examples of Legendrians with augmentation varieties  with multiple irreducible components; see \cite[Section 2.2.2]{LipshitzNg} for an explicit computation. We hope to further explore the construction and classification of fillings of the orientably fillable Legendrian links identified here in future work.

Our next result concerns non-orientable fillability of quasipositive $3$-braids.

\begin{theorem}\label{thm: intro_non-orientable_fillability}
   The closure $\hat{\beta}$ of a quasipositive $3$-braid $\beta$ is non-orientably exact Lagrangian fillable if $m\leq -2$. If $m\geq -1$, then $\hat{\beta}$ does not admit any decomposable non-orientable exact Lagrangian fillings.
\end{theorem}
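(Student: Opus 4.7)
The plan is to split the theorem into its two claims, leveraging the explicit max-tb Legendrian representatives $\Lambda(\beta)$ constructed earlier in the paper together with the orientable fillings provided by Theorem~\ref{thm: fillability}.

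For the constructive direction ($m\leq -2$), I would proceed by downward induction on $m$, with base case $m=-2$. When $m=-2$ and $\beta$ is quasipositive, Theorem~\ref{thm: fillability} already yields an orientable exact Lagrangian filling $L$ of $\Lambda(\beta)$; attaching a crosscap to $L$---by performing a minimum cobordism to introduce a standard Legendrian unknot, followed by a non-orientable saddle cobordism that merges this unknot back into $L$---produces a genuinely non-orientable filling. For $m\leq -3$, I would instead perform a non-orientable pinch at a crossing inside one of the extra $\Delta^{-1}$ factors of $\Lambda(\beta)$, producing $\Lambda(\beta')$ for some quasipositive 3-braid $\beta'$ with larger $m'$, and iterate until landing in the $m=-2$ case. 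The key subroutine is verifying that each intermediate braid remains quasipositive, which is exactly where Orevkov's classification of quasipositive 3-braids does the heavy lifting.

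For the obstruction direction ($m\geq -1$), I would invoke the Kauffman-polynomial bound from \cite{CCPRSY}: existence of a decomposable non-orientable exact Lagrangian filling of a Legendrian forces the Kauffman polynomial bound on $\overline{\tb}$ of the underlying smooth link to be sharp. I would compute the Kauffman polynomial of $\hat{\beta}$ inductively via skein relations applied to the Garside normal form~(\ref{eq:garside-normal}), read off the resulting Kauffman bound, and compare it with the value $|\beta|-3$ from Corollary~\ref{cor:max-tb}, showing strict inequality whenever $m\geq -1$ and $\beta$ is quasipositive. The case $m\geq 0$ partially overlaps with the positive-link obstruction already established in \cite{CCPRSY}, so the genuinely new input is the case $m=-1$.

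The main obstacle is the obstruction direction: extracting a usable closed form for the Kauffman bound directly from the Garside normal form is technically delicate and requires careful bookkeeping of the dominant framing-variable terms across all skein-relation branches. If the Kauffman computation proves unwieldy, a fallback is to track the Thurston--Bennequin number and Euler characteristic through a hypothetical decomposable non-orientable filling, arguing that any non-orientable pinch performed on $\Lambda(\beta)$ with $m\geq -1$ must either leave the quasipositive regime (violating Theorem~\ref{thm: fillability}) or fail a Bennequin-type bound needed to eventually terminate in Legendrian unknots. Either route is where the bulk of the technical work would live.
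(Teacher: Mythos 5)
Both halves of your plan contain genuine gaps. For the obstruction direction ($m\ge -1$), sharpness of the Kauffman bound is the wrong tool: by Corollary~\ref{cor: Ruling_TB}, a Legendrian admits a normal ruling if and only if the Kauffman bound on its $\tb$ is sharp, and for $m\ge -1$ the quasipositive closures in question are orientably fillable, hence admit (oriented, so in particular ordinary) normal rulings, hence \emph{do} have sharp Kauffman bound. The strict inequality you hope to extract from a skein computation cannot materialize, so your primary route proves nothing; your fallback is too vague to assess. The obstruction actually needed is the finer statement of \cite[Corollary 3.8]{CCPRSY} (Lemma~\ref{lem: non-orientable obstruction}): if \emph{every} normal ruling is oriented, i.e.\ $R_\La(z)=R_\La^\circ(z)$, then no decomposable non-orientable filling exists. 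The paper's only new input beyond Theorem~\ref{thm:ccpr+} is Lemma~\ref{lem: rulings_positive}, a direct check that every normal ruling of $\La_0(\beta_+\Delta^{-1})$ is oriented because any unoriented ruling would need exactly two negative switches and these force illegally nested disks.

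For the constructive direction ($m\le -2$), your base case is unsound: if a minimum followed by an unoriented saddle could convert a decomposable orientable filling of $\La$ into a decomposable non-orientable filling of the \emph{same} $\La$, the identical trick applied to the Hayden--Sabloff fillings of positive braid closures would contradict Theorem~\ref{thm:ccpr+}. There is no such general crosscap attachment preserving the boundary Legendrian; the unoriented band sum with an unknot does not return the original front with the correct classical invariants. Your inductive step for $m\le -3$ is also unsubstantiated: pinching a single negative crossing of a $\Delta^{-1}$ factor does not produce a braid of the form $\beta'_+\Delta^{m+1}$, and neither the quasipositivity of the intermediate braids nor the identification of the intermediate Legendrians with max-tb representatives is checked. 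The paper avoids induction on $m$ entirely: it constructs an explicit \emph{unoriented} normal ruling of the max-tb representative $\La_k(\beta)$ (Proposition~\ref{prop: max-tb_ruling}) and then, in Proposition~\ref{prop: non-orientably fillable}, resolves every switch of that ruling by pinching cobordisms (after preparatory Reidemeister II moves), terminating in unlinked max-tb unknots capped by minima; non-orientability comes from the switches at negative crossings. Any repair of your approach would essentially have to reproduce this ruling-based construction.
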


See Section~\ref{ssec:background-fillings} for the definition of decomposable. The case when $m \ge 0$ is proven in \cite{CCPRSY} using a normal ruling obstruction. We apply the same technique to the case $m = -1$. For $m \leq -2$, we construct a decomposable non-orientable exact Lagrangian filling of the max-tb Legendrian representative $\La(\beta)$ based on a normal ruling. In contrast with Theorem ~\ref{thm: fillability},  Theorem~\ref{thm: intro_non-orientable_fillability} does not cover all non-orientably fillable Legendrians of braid index 3 due to the fact that, as mentioned above, our methods rely on the quasipositivity characterization of \cite{Orevkov04}. In particular, there are known examples, such as the figure-eight knot, of 3-braid closures that are not quasipositive and are non-orientably fillable. Note that the case of $m=-2$ provides a new family of examples of Legendrian links that admit both orientable and non-orientable fillings, behavior originally observed in \cite{CCPRSY}.

\begin{remark}
While we only construct a single non-orientable filling for each Legendrian in our proof of Theorem~\ref{thm: intro_non-orientable_fillability}, we expect that a similar approach could produce a collection of (possibly distinct) fillings. One could reasonably hope to apply cluster theoretic methods similar to those used in \cite{CSHW2023} to distinguish them by their induced (ungraded) augmentations of the Legendrian contact dg-algebra. 
\end{remark}

The remainder of this paper is organized as follows:  We review background on quasipositive $3$-braids, on Legendrian links and their ruling invariants, and on constructions and obstructions of exact Lagrangian fillings of Legendrian links in Section~\ref{sec:background}. In Section~\ref{sec:max-tb}, we describe the max-tb Legendrian representatives of quasipositive $3$-braid closures touched on above and associate with them a ruling. We prove Theorem~\ref{thm: fillability} in Section~\ref{sec: orientable-fill} and Theorem~\ref{thm: intro_non-orientable_fillability} in Section~\ref{sec:non-orientable-fill}. 

\subsection*{Acknowledgments}
Thank you to Lenny Ng for helpful advice and feedback throughout this project. We also thank Josh Sabloff for the question that led us to consider non-orientable fillability and for discussing his conjecture with us. Zijun Li was involved in an earlier version of this project and we are grateful for his contribution. Finally, thanks to the anonymous referee for helpful feedback and suggestions.

\section{Background}
\label{sec:background}
\subsection{Quasipositive $3$-braids}
\label{ssec:3-braids}
Let $\Br_3$ denote the $3$-stranded braid group generated by $\sigma_1,\sigma_2$, where $\sigma_i$ corresponds to a single positive crossing between strands $i$ and $i+1$ of the braid, subject to the Artin relation $\sigma_1 \sigma_2\sigma_1 = \sigma_2\sigma_1\sigma_2$. Denote by $\Br_3^+$ the submonoid of $\Br_3$ generated by $\sigma_1$ and $\sigma_2$. Also, let $\Delta=\sigma_1\sigma_2\sigma_1$ denote the half-twist on $3$ strands, i.e. the \dfn{Garside element} of $\Br_3$. Then, any element $\beta\in \Br_3$ can be written in the \dfn{Garside normal form} 
\begin{equation*}
    \beta=\beta_+\Delta^m,
\end{equation*}
for some $m\in \Z$ and $\beta_+\in \Br_3^+$ not containing any power of $\Delta$. This presentation is unique up to the Artin relation by \cite[Corollary 2.3]{Orevkov04}.

We say that a braid $\beta$ is \dfn{positive} if it is in $\Br_3^+$ and \dfn{quasipositive} if $\beta = \prod_j a_jx_ja_j^{-1}$ for $a_j \in \Br_3$ and $x_j \in \Br_3^+$. Observe that a positive braid is necessarily quasipositive. These notions of positivity can be equivalently stated using the Garside normal form. First, note that $\beta$ is positive if and only if $m \ge 0$. For $m < 0$, the following criterion of quasipositivity is due to Orevkov.

\begin{theorem}[\cite{Orevkov04}, Proposition 3.1]
\label{thm:quasipositive-criterion}
    Let $\beta = \beta_+\Delta^m$ where $\beta_+ \in \Br_3^+$ and $m \leq 0$. The braid $\beta$ is quasipositive if and only if one can delete letters from $\beta_+$ so that the obtained word is equal in $\Br_3^+$ to the word $\Delta^{-m}$.
\end{theorem}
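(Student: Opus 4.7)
My plan is to prove the two implications separately, with the forward direction being the harder one.

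For the ``if'' direction, a direct telescoping argument suffices, with no induction and no use of the specific structure of $\Br_3$. Write the positive word as
\[
\beta_+ = \alpha_0\sigma_{j_1}\alpha_1\sigma_{j_2}\alpha_2\cdots\sigma_{j_k}\alpha_k,
\]
where $\sigma_{j_1},\dots,\sigma_{j_k}$ are the letters to be deleted so that $\alpha_0\alpha_1\cdots\alpha_k=\Delta^{-m}$ in $\Br_3^+$. Setting $b_\ell:=\alpha_0\cdots\alpha_{\ell-1}$, a direct computation gives
\[
\beta=\beta_+\Delta^m=\left(\prod_{\ell=1}^k b_\ell\sigma_{j_\ell}b_\ell^{-1}\right)\cdot(\alpha_0\alpha_1\cdots\alpha_k)\Delta^m=\prod_{\ell=1}^k b_\ell\sigma_{j_\ell}b_\ell^{-1},
\]
using $\alpha_0\cdots\alpha_k\cdot\Delta^m=\Delta^{-m}\Delta^m=e$. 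This exhibits $\beta$ as a product of conjugates of positive generators, hence as quasipositive.

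For the ``only if'' direction I would induct on the number $N$ of quasipositive factors in an expression $\beta=\prod_{j=1}^N a_j\sigma_{i_j}a_j^{-1}$. Bookkeeping via exponent sums forces $N=|\beta_+|+3m$, which is exactly the number of letters one needs to remove to reduce $\beta_+$ to a word of length $-3m$. The base case $N=0$ gives $\beta=e$, so $\beta_+$ and $\Delta^{-m}$ are both positive representatives of the same braid; by the well-definedness of positive word length in $\Br_3$ and the fact that any two positive representatives of the same element are connected by Artin moves, $\beta_+=\Delta^{-m}$ in $\Br_3^+$ and no deletions are required.

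The inductive step is where I expect the main obstacle. Given a factorization $\beta=(a\sigma_i a^{-1})\gamma$ with $\gamma$ quasipositive having $N-1$ factors, I would want to identify a single letter in the positive word $\beta_+$ whose deletion yields a positive word representing the positive part of $\gamma$ in a suitable Garside normal form, so that the inductive hypothesis applies to the remaining word. The difficulty is that $\beta_+$ need not be compatible in any obvious way with the chosen quasipositive factorization, and producing such a ``good'' letter requires bringing $\beta_+$, via Artin moves, into a form in which the contribution of the factor $a\sigma_i a^{-1}$ is explicitly visible. Orevkov's argument exploits structural features specific to $\Br_3$, notably that $\Delta^2$ is central, that $\Br_3/Z(\Br_3)\cong\PSL_2(\Z)$, and that $\Br_3^+$ admits a tractable Garside normal form. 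It is precisely this rigidity of 3-braids that drives the argument, and whose absence in higher-index braid groups is the obstacle flagged in the remark following the theorem.
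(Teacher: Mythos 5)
The statement is quoted by the paper directly from \cite{Orevkov04} (Proposition 3.1) and is not proved in the paper itself, so there is no internal argument to compare against; the question is simply whether your proposal constitutes a proof. It does not. Your ``if'' direction is correct and complete: the telescoping identity
\[
u_0x_1u_1\cdots x_ku_k=\Bigl(\prod_{\ell=1}^k b_\ell x_\ell b_\ell^{-1}\Bigr)(u_0u_1\cdots u_k),\qquad b_\ell=u_0\cdots u_{\ell-1},
\]
is valid in any group, and combined with $\alpha_0\cdots\alpha_k=\Delta^{-m}$ it exhibits $\beta$ as a product of conjugates of generators. This direction is routine and uses nothing about $\Br_3$. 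Your exponent-sum bookkeeping ($N=|\beta_+|+3m$ equals the number of deletions) and the base case via the embedding of $\Br_3^+$ into $\Br_3$ are also fine.

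The genuine gap is the entire ``only if'' direction, which is the actual content of Orevkov's theorem. You set up an induction on the number of quasipositive factors and then explicitly concede that you cannot carry out the inductive step: given $\beta=(a\sigma_i a^{-1})\gamma$, you have no mechanism for locating a letter of the fixed positive word $\beta_+$ whose deletion produces a positive word representing the positive part of $\gamma$ with the Garside exponent increased by the right amount. Naming the ingredients Orevkov uses (centrality of $\Delta^2$, the quotient $\Br_3/Z(\Br_3)\cong\PSL_2(\Z)$, the normal form in $\Br_3^+$) is not a substitute for the argument that deploys them; in particular, the passage from an arbitrary quasipositive factorization of $\beta$ to a deletion pattern on one chosen positive representative $\beta_+$ is exactly where all the difficulty lives, and nothing in your write-up bridges it. As it stands you have proved only the easy half of an ``if and only if,'' so the proposal is incomplete.
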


We will consider the ($0-$framed) closure $\hat{\beta}$ of a braid $\beta \in \Br_3$, whose smooth isotopy class is characterized by $\beta$ up to not only the Artin relations but also conjugation. In this setting, we can conjugate the Garside normal form of $\beta$ and write more concretely
\begin{equation*}
    \beta=\sigma_1^{k_1}\sigma_2^{k_2}\dots \sigma_1^{k_{r-1}}\sigma_2^{k_r}\Delta^m,
\end{equation*}
where $k_r=0$ if and only if $m$ is odd, and otherwise $k_i\geq 2$ for all $i$. Using the uniqueness of Garside normal form in $\Br_3$, it is not hard to see that $m$ is uniquely determined to be greatest among all braid words realizing the knot type $\hat{\beta}$. In particular, conjugating $\beta$ in the form of Equation~\ref{eq:garside-normal} does not increase $m$. 

A smooth link is \dfn{positive} if it has a projection in which all crossings are positive. However, not all positive links are closures of positive braids; see, for example, \cite{Hedden??} and references therein. On the other hand, a link is \dfn{quasipositive} if it is the closure of a quasipositive braid. Note that there is no ambiguity in this definition: if $\beta = \prod_j a_jx_ja_j^{-1}$ is a quasipositive braid, observe that the conjugate $\alpha\beta\alpha^{-1}$ is still quasipositive by inserting $\alpha\alpha^{-1}$ or $\alpha^{-1}\alpha$ in between the products $a_jx_ja_j^{-1}$. Thus, $\hat{\beta}$ is quasipositive as a smooth link if and only if $\beta$ is quasipositive as a braid. In terms of Equation~\ref{eq:garside-normal}, Orevkov's characterization of quasipositive $3$-braids implies that $m \ge 0$ or $r \ge - \lfloor \frac{3m-2}{2} \rfloor$ when $m < 0$. We note that this lower bound on $r$ is not sharp but suffices for the purpose of this paper.

For the rest of the paper, when we use $\hat{\beta}$ to denote the smooth link given as the closure of a quasipositive $3$-braid $\beta$, we assume that $\beta$ is in the specific Garside normal form given by Equation~\ref{eq:garside-normal}. 

\subsection{Legendrian links in $\R^3$}\label{ssec:links_background}

We briefly review the basic geometric terminology for Legendrian links in $\R^3$ that we will need for this paper. We refer readers to \cite{Etnyre:intro} for an in-depth introduction to these materials. The standard contact structure $\xi_{st}$ in $\R^3$ is the 2-plane field given by the kernel of the 1-form $\alpha_{\st}=dz-ydx$. A link $\La \subseteq (\R^3, \xi_{st})$ is \dfn{Legendrian} if it is everywhere tangent to $\xi_{\text{st}}$. In this paper, all Legendrian links are oriented and we consider Legendrian links up to Legendrian isotopy, i.e. ambient isotopy through a family of Legendrians.

We will often picture a Legendrian link via its front projection, namely the image of the link under the projection $\Pi_{xz}:\R^3 \to \R^2$ to the $xz$ plane. There are two classical invariants associated to a Legendrian link. The \dfn{Thurston-Bennequin number} $\tb(\Lambda)$ is the writhe of $\Pi_{xz}(\Lambda)$ minus the number of left (or right) cusps and the \dfn{rotation number} $\rot(\Lambda)$ is given by $\frac{1}{2}(c_- - c_+)$ where $c_-,c_+$ are the number of cusps oriented downwards and upwards, respectively.  

Any smooth link $K$ has a Legendrian $C^0$ approximation. It follows from the Bennequin inequality that the Thurston-Bennequin number of a Legendrian representative of $K$ is bounded above and we denote by $\overline{\tb}(K)$ the maximum tb attained over all Legendrians of smooth link type $K$. Alternatively, the HOMFLY polynomial $P_K(v,z)$ also gives a bound on the classical invariants. In particular, for any Legendrian representative $\Lambda$ of $K$, Fuchs and Tabachnikov showed in \cite{FuchsTabachnikov97} that
\begin{equation*}
    \tb(\Lambda) + |\rot(\Lambda)| \leq -d_{P_K},
\end{equation*}
where $d_{P_K}$ is the lowest degree in $v$ of $P_K(v,z)$. Thus, $\overline{\tb}(K) \leq -d_{P_K}$ and we refer this as the \dfn{HOMFLY bound} on the maximum Thurston-Bennequin number of $K$.

\begin{center}
    \begin{figure}[h!]{ \includegraphics[width=.3\textwidth]{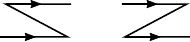}}\caption{Local picture of stabilization.}
    \label{fig: stabilization}\end{figure}
\end{center}

Given a Legendrian link $\La$, there is a local move that produces another Legendrian link in the same smooth knot type called stabilization and we describe it in the front diagram. First, the \dfn{stabilization} of $\La$ is obtained by removing a strand of $\La$ and replacing it with one of the zig-zags shown in Figure~\ref{fig: stabilization}. Assuming $\La$ is oriented as shown, the left zig-zag gives a positive stabilization and the right one produces a negative stabilization, denoted by $S_+(\La)$ and $S_-(\La)$, respectively. One may check that $\tb(S_\pm(\La)) = \tb(\La) - 1$ and $\rot(S_\pm(\La)) = \rot(\La) \pm 1$. The \dfn{destabilization} of $\La$ is the reverse process of stabilization and may not always be possible for an arbitrary Legendrian.

Of particular interest in this paper are Legendrians obtained from the closure $\hat{\beta}$ of a $3$-braid $\beta$, which we now describe using their front projections. Suppose $\hat{\beta}$ is presented by drawing $\beta$ horizontally and joining the left and right ends of $\beta$ by a nested set of non-intersecting arcs. We fix an orientation that goes counterclockwise around such a closure. If $\beta$ is positive, a Legendrian representative of $\hat{\beta}$ is obtained by simply replacing all vertical tangencies with cusps; namely, the \dfn{rainbow closure} of $\beta$. See Figure \ref{fig: rainbow} (left). \cite[Theorem 3.4]{Satellite1} proves that rainbow closures are the unique max-tb Legendrian representatives of braid positive links. Thus, observe from Figure~\ref{fig: rainbow} (left) that $\overline{\tb}(\hat{\beta}) = |\beta| - 3$, where $|\beta|$ is the signed word length of $\beta$. 

If in its Garside normal form, $\beta = \beta_+\Delta^{-1}$, we isotope $\hat{\beta}$ to the $(-1)$-closure of $\beta_+\Delta$ and replace all vertical tangencies with cusps; see Figure~\ref{fig: rainbow} (right). \cite{CGGLSS} shows that any Legendrian of this form is orientably fillable and is thus a max-tb Legendrian representative by Lemma~\ref{lem:orientable-obs} and Corollary~\ref{cor: Ruling_TB}. Observe from Figure~\ref{fig: rainbow} (right) that $\overline{\tb}(\hat{\beta}) = |\beta|$.

Lastly, if $m < -1$, we isotope $\hat{\beta}$ to arrange one negative half twist vertically to the right of $\beta_+$ and the remaining $-m-1$ negative half twists vertically to the left of $\beta_+$. A replacement of all vertical tangencies with cusps yields a Legendrian representative of $\hat{\beta}$. We denote this Legendrian by $\La_0(\beta)$ if $m$ is even; see Figure~\ref{fig: initial_fronts} (left). Otherwise, we see a stabilization at the top of the front where we can immediately destabilize once to produce $\La_{0}(\beta)$, pictured in Figure~\ref{fig: initial_fronts} (right). Note that $\tb(\Lambda_0(\beta)) = |\beta| + \lfloor \frac{3m}{2} \rfloor$, and  $\rot(\Lambda_0(\beta)) = -\lfloor \frac{3(m+2)}{2} \rfloor$. In contrast with the cases $m \ge -1$ above, $\Lambda_0(\beta)$ is often not a max-tb representative. However, it will serve as an initial front that we iteratively destabilize in order to produce a (possibly non-unique) max-tb representative in Section~\ref{sec:max-tb}.

\begin{center}
    \begin{figure}[h!]{ \includegraphics[width=.8\textwidth]{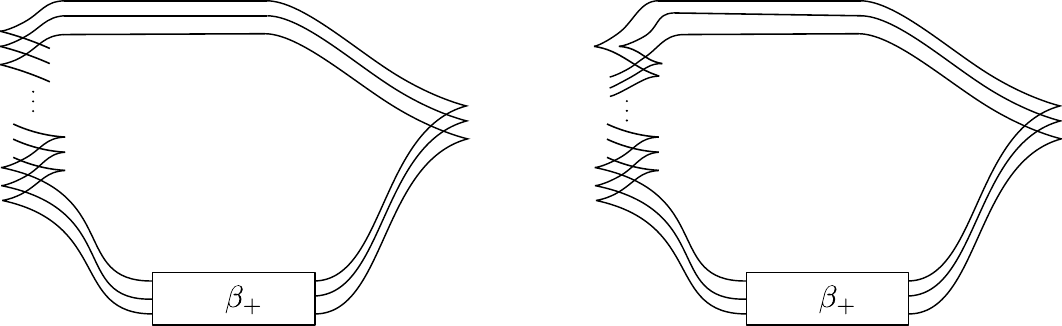}}\caption{Initial fronts $\Lambda_0(\beta)$ for $m$ even (left) and odd (right).}
    \label{fig: initial_fronts}\end{figure}
\end{center}

\subsection{Rulings of Legendrian links}

Normal rulings are combinatorial Legendrian invariants developed independently by Fuchs \cite{Fuchs03} and Chekanov--Pushkar \cite{ChekanovPushkar05}. They are intimately tied to augmentations of the Legendrian contact dg-algebra \cite{Fuchs03, FuchsIshkhanov, Sabloff05,  HenryRutherford15}, as well as the existence of exact Lagrangian fillings \cite{atiponrat2015, CCPRSY}. Here we define rulings and discuss some of their properties most relevant to fillability.

    Given a Legendrian $\La$ with a fixed front projection $D$, a \dfn{normal ruling} $\rho(D)$ of $\La$ is a set of crossings $s(\rho)$ called \dfn{switches} such that resolving each of the switches yields a union of max-tb unknots $\La_1, \dots, \La_m$ satisfying
    \begin{itemize}
        \item each $\La_i$ has  crossings, two cusps, and bounds an embedded disk
        \item exactly two link components are incident to any switches
        \item in a neighborhood of each switch, the incident ruling disks $D_i$ are either completely nested or disjoint, as in the top row of Figure \ref{fig: RulingDef}
    \end{itemize}

  A crossing with no switches is referred to as either a \dfn{departure} or \dfn{return} if it resembles the middle or bottom row, respectively, of Figure \ref{fig: RulingDef}. We denote by $d(\rho)$ (resp. $r(\rho)$) the set of departures (resp. returns) of $\rho$.  

\begin{center}
    \begin{figure}[h!]{ \includegraphics[width=.8\textwidth]{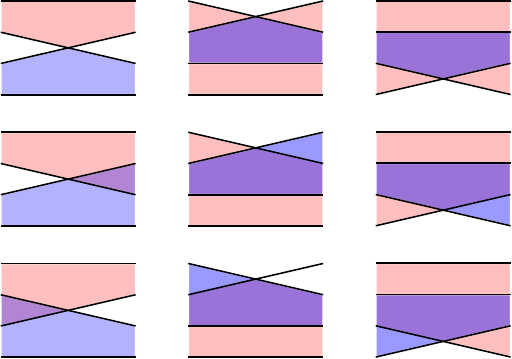}}\caption{All possible configurations of ruling disks in a neighborhood of switches (top), departures (middle), and returns (bottom).}
    \label{fig: RulingDef}\end{figure}
\end{center}

The set of rulings of $D$ is a Legendrian isotopy invariant of $\La$ \cite{ChekanovPushkar05}, hence we may discuss the rulings of $\La$ rather than the rulings of the front diagram $D$. 

A normal ruling $\rho$ is \dfn{oriented} if every switch of $\rho$ occurs at a positive crossing, and unoriented otherwise. We define the \dfn{ruling polynomial} of $\La$ to be
$$R_\La(z)=\sum_{\text{rulings } \rho} z^{s(\rho)-r(\rho)+1}.$$ 

We define the \dfn{oriented ruling polynomial} $R_\La^\circ(z)$ of $\La$ analogously by summing over all oriented rulings. 

Given a smooth knot type $K$, normal rulings of its Legendrian representatives are closely related to various bounds on the maximum Thurston-Bennequin number of $K$.     
\begin{theorem}[\cite{Rutherford06}] 
Given a Legendrian link $\La$ of smooth knot type $K$,
\begin{enumerate}
    \item the ruling polynomial $R_\La(z)$ is the coefficient of $a^{-tb(\La)-1}$ in the Kauffman polynomial $F_K(a, z)$.
    \item the oriented ruling polynomial $R^\circ_\La(z)$ is the coefficient of $a^{-tb(\La)-1}$ in the HOMFLY polynomial $P_K(a, z)$.
\end{enumerate}
\end{theorem}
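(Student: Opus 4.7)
The plan is to prove Rutherford's theorem via a skein-relation state-sum for the Kauffman and HOMFLY polynomials. The two parts have parallel structure, so I would first treat the Kauffman case in detail and then indicate the modifications needed for the oriented HOMFLY version.

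The approach is to expand $F_K(a,z)$ as a sum over all resolutions of the crossings of a fixed front diagram $D$ of $\La$, using the Kauffman skein relation to trade each crossing for a $z$-weighted combination of the two planar smoothings (plus the identity terms). After all crossings are resolved, one is left with a collection of disjoint closed curves in the plane decorated by the cusps inherited from $D$; each such ``state'' contributes a monomial in $a^{\pm 1}$ and $z^{\pm 1}$ determined by the writhe correction $a^{-w(D)}$, the number of loops via the loop-value relation, and the $z$-factors accumulated at each crossing. The coefficient of $a^{-\tb(\La)-1}$ is extracted by isolating only those states whose curves, together with the original cusps, assemble into an embedded collection of topological disks in the plane. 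Each such disk is a standard eye-shaped unknot of Thurston--Bennequin number $-1$, and the resulting configuration is precisely a normal ruling of $D$. A careful local analysis at each crossing shows that the $z$-exponent of such a state is $s(\rho)-r(\rho)+1$, matching the definition of $R_\La(z)$ once one accounts for the loop-value relation and writhe correction.

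For the oriented HOMFLY case, the argument is parallel but uses the oriented skein relation, which restricts the allowed ``switch'' resolutions at a crossing to those that preserve the orientations of the incoming strands. This forces switches to occur only at positive crossings, so exactly the oriented rulings contribute to the extremal coefficient; the bookkeeping is actually simpler here since the HOMFLY expansion of each crossing produces fewer terms than its Kauffman counterpart.

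The main obstacle is proving that states not coming from rulings contribute strictly to higher powers of $a$, ensuring that the coefficient of $a^{-\tb(\La)-1}$ is given only by rulings. This requires an induction on the number of crossings of $D$, using the fact that any non-embedded resolution pattern either increases the number of loops or forces a cancellation, each of which raises the $a$-power by at least one. The most delicate part of this step is identifying the normality condition on ruling disks---nested or disjoint at each switch---as the local compatibility forced on two adjacent skein resolutions in order for the state to survive in the extremal $a$-coefficient; handling this combinatorially in the presence of cusps is where the argument is technically heaviest.
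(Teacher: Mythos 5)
The paper does not prove this statement: it is imported verbatim from Rutherford's work (\cite{Rutherford06}) and used as a black box, so there is no internal proof to compare against. Measured against Rutherford's actual argument, your outline points in the right general direction---a skein-theoretic computation of the extremal $a$-coefficient in which normal rulings appear as the surviving states---but as written it has a genuine gap, and the gap sits exactly where the theorem's content lives. First, the Kauffman and HOMFLY skein relations are not resolutions of a crossing into planar smoothings alone: each relation also contains the crossing-switched diagram, so ``expand $F_K$ as a sum over all resolutions of $D$'' is not a well-defined state sum. One must first order the crossings and reduce to descending (or otherwise standard) diagrams, or set up the induction so that each application of the skein relation strictly decreases a complexity measure; your sketch acknowledges the extra terms parenthetically but never resolves this, and without it the ``states'' you sum over are not defined. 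Second, the assertion that non-ruling states contribute only to strictly higher powers of $a$, and that the nested-or-disjoint normality condition is precisely the local survival criterion at a switched crossing, is stated as ``the main obstacle'' and then described only as ``requiring an induction.'' That estimate, together with the verification that the $z$-exponent of a surviving state equals the exponent in the definition of $R_\La(z)$ after the writhe correction and loop-value bookkeeping, is the theorem; deferring it leaves a plan rather than a proof.

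A smaller but substantive point: your claim that in the oriented (HOMFLY) case the skein relation ``forces switches to occur only at positive crossings'' conflates two different conditions. Whether a switch is compatible with orientations is a statement about the directions of the two strands meeting at the crossing (and how the ruling disks are traversed), not about the sign of the crossing; the identification of orientation-preserving switches with positive crossings holds only in special positions such as the braid-closure fronts used in this paper. If you intend the argument to apply to an arbitrary front diagram of $\La$, as the theorem requires, this step needs to be reformulated in terms of the orientation of the strands at the switch.
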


Upper bounds on $tb(\La)$ coming from the HOMFLY and Kaufmann polynomials then yield the following corollary.

\begin{corollary}[\cite{Rutherford06}] \label{cor: Ruling_TB}
A Legendrian link $\La$ admits a(n oriented)  normal ruling if and only if the Kaufmann bound (resp. HOMFLY bound) on $\tb(\La)$ is sharp. Moreover, if $\La$ admits a normal ruling, it maximizes tb in its smooth isotopy class.
\end{corollary}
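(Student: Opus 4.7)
The plan is to deduce the corollary directly from Rutherford's theorem above, together with the polynomial-based upper bounds on $\tb$ that underlie it. Recall that the Kauffman bound says more than just $\tb(\La) \leq -\operatorname{mindeg}_a F_K(a,z) - 1$: it asserts that for any Legendrian representative $\La$ of the smooth link $K$, the coefficient of $a^k$ in $F_K(a,z)$ vanishes for every $k < -\tb(\La) - 1$. Sharpness of the bound is then the existence of some Legendrian realizing equality. The HOMFLY polynomial gives an analogous statement for oriented rulings.

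I would argue both directions of the Kauffman case at once. For any Legendrian $\La$ of knot type $K$, Rutherford identifies $R_\La(z)$ with the coefficient of $a^{-\tb(\La)-1}$ in $F_K(a,z)$. The Kauffman bound forces $\operatorname{mindeg}_a F_K \geq -\tb(\La) - 1$, so this coefficient is nonzero if and only if $\operatorname{mindeg}_a F_K = -\tb(\La)-1$, i.e., if and only if $\La$ attains the Kauffman upper bound on $\tb$. Thus $R_\La(z) \neq 0$, equivalently $\La$ admitting a normal ruling, is equivalent to $\La$ realizing $\overline{\tb}(K) = -\operatorname{mindeg}_a F_K - 1$, which is precisely the statement that the Kauffman bound is sharp. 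This simultaneously proves the \emph{moreover} clause: any Legendrian admitting a normal ruling maximizes $\tb$ in its smooth isotopy class.

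The oriented/HOMFLY case runs identically after replacing $F_K$ with $P_K$ and $R_\La$ with $R^\circ_\La$; the corresponding statement that $\tb(\La)+|\rot(\La)| \leq -\operatorname{mindeg}_a P_K - 1$ is refined in the same way to a vanishing of coefficients below the critical $a$-degree. I do not anticipate any major obstacle beyond this bookkeeping; the only subtlety is correctly interpreting the polynomial bounds as the vanishing of all coefficients in $a$ below the critical degree $-\tb(\La)-1$, so that Rutherford's identification of the critical coefficient with the (oriented) ruling polynomial translates nonvanishing of $R_\La(z)$ (respectively $R^\circ_\La(z)$) directly into sharpness of the Kauffman (respectively HOMFLY) bound on $\tb$.
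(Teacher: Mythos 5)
Your derivation is the one the paper intends: the corollary is presented as an immediate consequence of Rutherford's theorem together with the Kauffman/HOMFLY upper bounds on $\tb$, and your argument---the degree bound forces all coefficients of $F_K$ (resp.\ $P_K$) beyond the critical $a$-degree $-\tb(\La)-1$ to vanish, so Rutherford's identification of that coefficient with $R_\La(z)$ (resp.\ $R^\circ_\La(z)$) makes the existence of a(n oriented) normal ruling equivalent to $\La$ itself attaining the bound, hence to maximizing $\tb$ in its smooth isotopy class---is exactly that derivation, including the correct reading of ``sharp'' as ``attained by $\La$.'' The only flaw is a bookkeeping inconsistency: your two formulations of the Kauffman bound contradict each other, since $\tb(\La)\le -\operatorname{mindeg}_a F_K-1$ says $\operatorname{mindeg}_a F_K\le -\tb(\La)-1$ while ``the coefficient of $a^k$ vanishes for every $k<-\tb(\La)-1$'' says $\operatorname{mindeg}_a F_K\ge -\tb(\La)-1$; in the variable convention matching Rutherford's statement the bound reads $\tb(\La)+1\le -\operatorname{maxdeg}_a F_K$, i.e.\ the coefficients of $a^k$ vanish for $k>-\tb(\La)-1$, and with that single correction (applied likewise to $P_K$) your argument goes through verbatim.
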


\subsection{Exact Lagrangian fillability}\label{ssec:background-fillings}

We end this section by a recollection of some results on exact Lagrangian cobordisms and fillings of a Legendrian link that we use in this paper. First, the symplectization $\Symp(M, \ker(\alpha))$ of a contact manifold $(M, \ker(\alpha))$ is the symplectic manifold $(\R_t\times M, d(e^t\alpha))$. Given two Legendrian links $\La_-, \La_+\subseteq (\R^3,\xi_{\st})$, an \dfn{exact Lagrangian cobordism} $L\subseteq \Symp(\R^3, \ker(\alpha_{\st}))$ from $\La_-$ to $\La_+$ is a cobordism $\Sigma$ such that there exists some $T>0$ satisfying the following: 

	\begin{enumerate}
		\item $d(e^t\alpha_{\st})|_\Sigma=0$ 
		\item $\Sigma\cap ((-\infty, T]\times \R^3)=(-\infty, T]\times \La_-$ 
		\item $\Sigma\cap ([T, \infty)\times \R^3)=[T, \infty) \times \La_+$ 
		\item $e^t\alpha_{\st}|_\Sigma=df$ for some function $f: \Sigma\to \R$ that is constant on $(-\infty, T]\times \La_-$ and $[T, \infty)\times \La_+$. 
	\end{enumerate}

An exact Lagrangian filling of the Legendrian link $\La\subseteq (\R^3, \xi_{\st})$ is an exact Lagrangian cobordism $L$ from $\emptyset$ to $\La$ that is embedded in $\Symp(\R^3, \ker(\alpha_{\st}))$. In this paper, we say that a Legendrian link is \dfn{orientably fillable} if it admits an orientable exact Lagrangian filling and a smooth knot type $K$ is \dfn{orientably fillable} if it has an orientably fillable Legendrian representative. Similar terminology is defined for \dfn{non-orientable fillability}.

In the front projection, one can construct exact Lagrangian cobordisms via traces of Legendrian isotopies and the two additional elementary cobordisms given in Figure \ref{fig: minima_saddle}. Cobordisms constructed using these pieces are referred to as \dfn{decomposable}.  We will also rely on a particular Lagrangian 1-handle attachment built out of several elementary exact Lagrangian cobordisms, as pictured in Figure \ref{fig: D4-}. We refer to this cobordism as a \dfn{pinching cobordism}, as it is Hamiltonian isotopic, relative to the boundary, to the crossing resolution cobordism in the Lagrangian projection  \cite[Proposition 3.1]{Hughes2021b}.\footnote{This cobordism also appears in the literature as Lagrangian projection of the Legendrian lift of a generic perturbation of the $D_4^-$ singularity in a Legendrian surface wavefront; See \cite[Section 2.1.2]{Hughes2021b} for more details.}

\begin{center}
    \begin{figure}[h!]{ \includegraphics[width=.8\textwidth]{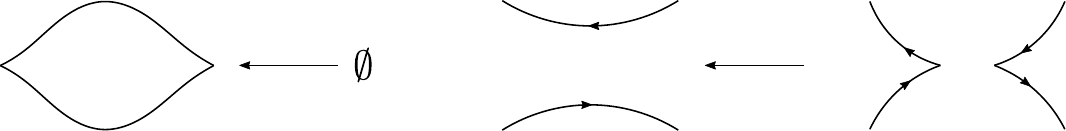}}\caption{Minimum and (orientable) saddle cobordisms depicted in the front projection. The horizontal arrows point in the direction of expanding symplectic area.}
    \label{fig: minima_saddle}\end{figure}
\end{center}

\begin{center}
    \begin{figure}[h!]{ \includegraphics[width=.8\textwidth]{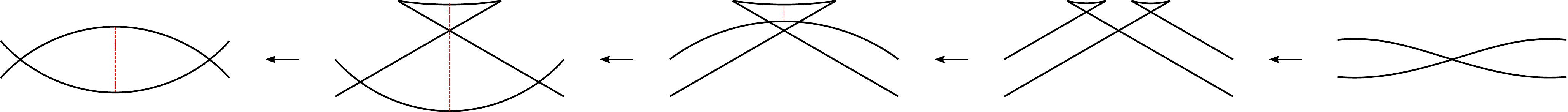}}\caption{Pinching cobordism depicted in the front projection. The dashed red line depicts a contractible Reeb chord. The arrows point in the direction of expanding symplectic area.}
    \label{fig: D4-}\end{figure}
\end{center}

Consider now a smooth knot type $K$ obtained from the closure of a braid $\beta$. By constructing a filling explicitly from an oriented normal ruling in which all crossings are switched, Hayden-Sabloff proves the following result.

\begin{theorem}
[\cite{HaydenSabloff}, Theorem 1.1]\label{thm:haydensabloff}
    If $\beta$ is positive, then $K$ is orientably fillable. 
\end{theorem}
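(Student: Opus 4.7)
The plan is to construct an explicit decomposable orientable exact Lagrangian filling of a max-tb Legendrian representative of $K$, built entirely from the two elementary pieces in Figure~\ref{fig: minima_saddle}.

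First, I would realize $K$ as the rainbow closure $\La$ of $\beta$, as described in Section~\ref{ssec:links_background}: draw $\beta$ horizontally, join the left and right ends by nested non-intersecting arcs, and replace every vertical tangency with a cusp. Because $\beta$ is positive, every crossing in $\beta$---and hence every crossing in the front projection of $\La$---is a positive crossing between two strands oriented in the same direction under the counterclockwise closure orientation. This is precisely the local model to which the orientable Lagrangian saddle cobordism on the left of Figure~\ref{fig: minima_saddle} applies.

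Next, I would perform an orientable saddle cobordism at each of the $|\beta|$ crossings of $\La$, working from right to left along the braid. Since the horizontal smoothing of a $\sigma_i$ crossing simply deletes that generator from the braid word, after all resolutions the braid becomes the identity on $n$ strands and its closure is a disjoint union of $n$ standard max-tb Legendrian unknots, each bounding an obvious disk in the front. I would then cap each such unknot off with an orientable minimum cobordism (right of Figure~\ref{fig: minima_saddle}).

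Concatenating these saddles and minima yields a decomposable orientable exact Lagrangian cobordism from $\emptyset$ to $\La$, proving that $K$ is orientably fillable. The main point requiring care is to verify that each intermediate stage remains a valid Legendrian front and that the final $n$ components are indeed max-tb unknots; this can be tracked directly through the crossing combinatorics, or equivalently reinterpreted as the filling associated to the canonical oriented normal ruling of $\La$ in which every crossing is a switch, whose ruling disks are the $n$ nested disks visible from the closure arcs. Since no genuine obstruction arises, this construction succeeds uniformly for any positive braid $\beta$.
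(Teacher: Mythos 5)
Your construction is correct and is essentially the argument the paper relies on: the paper does not reprove this statement but cites Hayden--Sabloff, describing their proof as building a filling explicitly from the oriented normal ruling of the rainbow closure in which every crossing is switched. That is precisely your procedure of resolving all $|\beta|$ positive crossings by orientable saddle cobordisms and capping the resulting max-tb unknots with minimum cobordisms.
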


On the obstruction side, it is known that quasipositivity of $\beta$ is a necessary condition for $K$ to be fillable: a filling of $K$ can be perturbed to a symplectic filling of a transverse knot \cite{MR1362834} and a smooth knot type with a symplectically fillable transverse representative is quasipositive \cite{BoileauOrevkov01}.

Another notable obstruction of orientable fillability is the sharpness of HOMFLY bound on $\overline{\tb}$. In particular, an orientable filling induces an augmentation of the Legendrian contact homology dg-algebra of $\Lambda$, which further produces an orientable ruling under the correspondence in \cite{Fuchs03, FuchsIshkhanov, Sabloff05, Leverson}. The claimed obstruction is then a consequence of Corollary \ref{cor: Ruling_TB}. In summary,

\begin{lemma} [\cite{HaydenSabloff}]\label{lem:orientable-obs}
    If a Legendrian $\Lambda$ is orientably fillable, then it is quasipositive and the HOMFLY bound on $\overline{\tb}(K)$ is sharp.
\end{lemma}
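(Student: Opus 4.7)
The plan is to handle the two conclusions separately, using the setup sketched in the paragraphs immediately preceding the lemma. For quasipositivity, I would appeal to the chain of results already cited: starting from the given orientable exact Lagrangian filling $L$ of $\Lambda$, invoke \cite{MR1362834} to perturb $L$ slightly to a weak symplectic filling of a transverse push-off $T$ of $\Lambda$ (with the same underlying smooth knot type $K$). Then apply \cite{BoileauOrevkov01}, which asserts that any smooth knot type admitting a symplectically fillable transverse representative is quasipositive. Combining these gives the first half of the conclusion. The only subtlety I would need to check is that the perturbation can be carried out in the symplectization setting relevant here and that the resulting transverse representative indeed lives in the smooth class of $K$, but both are standard in the cited references.

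For the HOMFLY bound, the route is via Legendrian contact homology. I would argue that an orientable exact Lagrangian filling $L$ of $\Lambda$ functorially induces a (graded, \(\Z/2\mathbb{Z}\)) augmentation $\varepsilon_L\colon \mathcal{A}(\Lambda) \to \mathbb{Z}/2$ of the Chekanov--Eliashberg dg-algebra of $\Lambda$; this is the standard cobordism-to-augmentation construction. Next, I would apply the augmentation-to-ruling correspondence of Fuchs--Ishkhanov \cite{FuchsIshkhanov}, Sabloff \cite{Sabloff05} (and Leverson's refinement for the oriented/graded case), which produces from $\varepsilon_L$ an \emph{oriented} normal ruling $\rho$ of $\Lambda$. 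Finally, Corollary~\ref{cor: Ruling_TB} (Rutherford) says that the existence of such an oriented ruling forces the HOMFLY bound on $\overline{\tb}(K)$ to be sharp and forces $\tb(\Lambda)$ to realize it.

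Putting the two arguments together gives the lemma. The main obstacle—if one treats this as a genuine step rather than a citation-chase—is the cobordism-to-augmentation construction: producing the augmentation $\varepsilon_L$ requires counting pseudoholomorphic disks in the symplectization with boundary on $L$, and verifying that orientability of $L$ is precisely what is needed to obtain a $\mathbb Z$-graded (rather than merely $\mathbb Z/2$-graded) augmentation and hence an \emph{oriented} ruling. In the proof as I would present it, however, I would cite this black-box rather than reprove it, so the argument reduces to a short invocation of the four results \cite{MR1362834}, \cite{BoileauOrevkov01}, the augmentation-ruling correspondence, and Corollary~\ref{cor: Ruling_TB}.
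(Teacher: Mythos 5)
Your proposal is correct and follows essentially the same route the paper takes: the paper does not give a formal proof of this lemma but sketches exactly your argument in the two paragraphs preceding it, deducing quasipositivity from \cite{MR1362834} and \cite{BoileauOrevkov01}, and deducing sharpness of the HOMFLY bound from the filling-to-augmentation-to-oriented-ruling chain \cite{Fuchs03, FuchsIshkhanov, Sabloff05, Leverson} combined with Corollary~\ref{cor: Ruling_TB}.
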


It is a conjecture of Hayden-Sabloff that these two necessary conditions together also give the sufficient condition for the orientable fillability of $K$. We shall consider their conjecture in the case when $K$ is the closure of a $3$-braid. In this context, Theorem~\ref{thm:haydensabloff} deals with the case when the integer $m$ from Equation \ref{eq:garside-normal} is non-negative. When $m = -1$, the fact that $K$ is orientably fillable follows from a general framework developed in \cite{CGGLSS}. In more detail, for $\beta \in \Br_n^+$, the authors construct a graph encoding an (orientable) exact Lagrangian filling of the $(-1)$-closure of $\beta\Delta$, smoothly the braid closure of $\beta\Delta^{-1}.$ One can produce a filling from this graph by applying the Legendrian weave construction of \cite[Section 2]{CasalsZaslow} to the graph after performing the minor modification  discussed in \cite[Definition 7.2]{CasalsNho25}.

We shall also consider non-orientable fillability of $K$. The case when $K$ is positive is addressed in \cite{CCPRSY}.

\begin{theorem}[\cite{CCPRSY}, Theorem 1.3] \label{thm:ccpr+}
    If $K$ is positive, it is not non-orientably decomposably fillable.
\end{theorem}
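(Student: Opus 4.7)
The plan is to argue by contradiction via the dictionary between decomposable fillings and normal rulings. Suppose $K$ is positive and some Legendrian representative $\Lambda$ of $K$ admits a decomposable non-orientable exact Lagrangian filling $L$. Decomposing $L$ into its elementary pieces---minimum cobordisms, orientable saddles, non-orientable saddles, and pinching cobordisms---one reads off a normal ruling $\rho_L$ of $\Lambda$: minima create ruling disks, and each saddle introduces a switch at the corresponding crossing. In the orientable case every switch occurs at a crossing with cooriented strands; because $L$ is non-orientable, however, its decomposition must include at least one non-orientable saddle, forcing the associated switch in $\rho_L$ to occur at a crossing with disagreeing strand orientations. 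Hence $\rho_L$ is a non-oriented ruling, contributing to $R_\Lambda(z)$ but not to $R_\Lambda^\circ(z)$, so the difference $R_\Lambda(z)-R_\Lambda^\circ(z)$ is a nonzero polynomial with nonnegative coefficients.

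I would then apply Rutherford's theorem to translate this strict inequality into the statement that the coefficient of $a^{-\tb(\Lambda)-1}$ in the Kauffman polynomial $F_K(a,z)$ strictly exceeds (again as a polynomial in $z$ with nonnegative coefficients) the corresponding coefficient in the HOMFLY polynomial $P_K(a,z)$. In particular both coefficients are nonzero, so $\tb(\Lambda)$ simultaneously realizes the Kauffman and HOMFLY bounds on $\overline{\tb}$, and hence $\tb(\Lambda)=\overline{\tb}(K)$.

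The contradiction would then come from the following classical input: for a positive link $K$, the coefficient of $a^{-\overline{\tb}(K)-1}$ in $P_K$ equals the corresponding coefficient in $F_K$ as polynomials in $z$. To verify this one applies the state-sum expansions of the HOMFLY and Kauffman polynomials to a positive diagram $D$ of $K$ and observes that in both expansions the lowest $a$-degree contribution is concentrated on the oriented Seifert-graph resolution of $D$, producing the same $z$-polynomial. Granting this, we obtain $R_\Lambda(z)=R_\Lambda^\circ(z)$, which contradicts the strict inequality established above, completing the proof.

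The main obstacle is the positive-link coincidence of lowest $a$-degree coefficients in the last step. For braid-positive links this is transparent, because the rainbow closure of a positive braid has all crossings positive and hence every normal ruling is automatically oriented; but the hypothesis here is only that $K$ admits some positive projection, which need not be braid-like. Handling this general case requires either a careful comparison of the two skein state sums on a positive diagram---likely by induction on the number of crossings---or an appeal to the simultaneous sharpness of the Morton-Franks-Williams inequality and its Kauffman analogue for positive links. The ruling-to-filling translation in the first step is also a technical point worth spelling out carefully, as the pinching cobordisms do not correspond to switches and must be shown to be compatible with whatever ruling the saddles already determine.
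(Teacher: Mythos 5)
Your overall strategy---obstructing decomposable non-orientable fillings by a normal-ruling argument---is the same one used in \cite{CCPRSY}, whose key step is quoted in this paper as Lemma~\ref{lem: non-orientable obstruction}. The first half of your argument, that a decomposable non-orientable filling must contain a pinch at a crossing whose strands are incompatibly oriented and hence induces an unoriented normal ruling, so that $R_\Lambda(z)\neq R^\circ_\Lambda(z)$, is essentially a re-derivation of that lemma and is fine in outline, with the caveat you already flag: one must verify that the pinched crossings of a decomposable filling really do assemble into a \emph{normal} ruling. (Also note that in this paper the ``pinching cobordism'' \emph{is} the saddle piece, not a separate elementary cobordism.)

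The genuine gap is in the second half. You reduce the needed equality $R_\Lambda(z)=R^\circ_\Lambda(z)$ for positive $K$, via Rutherford's theorem, to the claim that the coefficients of $a^{-\overline{\tb}(K)-1}$ in $F_K$ and $P_K$ agree, and then concede you cannot prove that claim. But by Rutherford's theorem those coefficients \emph{are} the two ruling polynomials, so your ``classical input'' is literally equivalent to the statement you are trying to establish; the detour through skein state sums buys nothing, and the proof is not complete. The argument of \cite{CCPRSY} avoids this entirely: a positive diagram of $K$ can be Legendrian-realized as a front in which every crossing is positive (the same realization used to show positive links are orientably fillable and tb-maximizing), and a front with no negative crossings admits no unoriented normal rulings at all, so $R_\Lambda=R^\circ_\Lambda$ on the nose and Lemma~\ref{lem: non-orientable obstruction} applies. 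Your remark that the braid-positive case is ``transparent'' is exactly the right idea; the missing step is that it extends to arbitrary positive diagrams through this Legendrian realization, not through a comparison of the two skein expansions. Compare Lemma~\ref{lem: rulings_positive}, where the analogous ``every normal ruling is forced to be oriented'' analysis is carried out directly on the $m=-1$ fronts.
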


The proof of this theorem makes use of the following ruling obstruction of non-orientable decomposable fillings, which we will also exploit in Section~\ref{sec:non-orientable-fill}.

\begin{lemma}[\cite{CCPRSY}, Corollary 3.8] \label{lem: non-orientable obstruction}
Let $\La$ be a Legendrian link with underlying smooth isotopy class $K$.    If $R_\La(z)=R^\circ_\La(z)$, then no Legendrian representative of $K$ admits a decomposable non-orientable fillings.
\end{lemma}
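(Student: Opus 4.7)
The plan is to argue the contrapositive: assuming some Legendrian representative $\La'$ of $K$ admits a decomposable non-orientable exact Lagrangian filling $L$, I would produce a non-oriented normal ruling of $\La'$ and then use Rutherford's theorem to transport the resulting inequality $R_{\La'}(z) \neq R^\circ_{\La'}(z)$ into a contradiction with the hypothesis $R_\La = R^\circ_\La$.

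First, I would set up a filling-to-ruling correspondence. Reading $L$ from top to bottom, its elementary cobordisms are of three types: minima introducing max-$\tb$ unknots, orientable saddles resolving positive crossings (Figure~\ref{fig: minima_saddle}), and pinching cobordisms at contractible Reeb chords lying over negative crossings (Figure~\ref{fig: D4-}). After undoing the intervening Legendrian isotopies, each saddle or pinch designates a distinguished crossing of $\La'$, which I declare to be a switch of a ruling $\rho$. Normality at each switch follows because immediately below the corresponding elementary cobordism the Legendrian tangle bounds embedded planar disks that realize the nesting-or-disjointness condition of Figure~\ref{fig: RulingDef}. The remaining crossings of $\La'$ are partitioned into departures and returns according to the local configuration of the ruling disks.

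Next, I would verify that $L$ is orientable if and only if $\rho$ is an oriented ruling. Orientable saddles resolve only positive crossings while a pinching cobordism by construction sits at a negative crossing; hence $L$ is orientable precisely when every switch of $\rho$ occurs at a positive crossing, which is exactly the definition of an oriented ruling. Since $L$ is assumed non-orientable, $\rho$ must then be a non-oriented ruling of $\La'$, contributing a term $z^{s(\rho) - r(\rho) + 1}$ to $R_{\La'}(z)$ that does not appear in $R^\circ_{\La'}(z)$, so $R_{\La'} \ne R^\circ_{\La'}$. By Rutherford's theorem the two sides equal the coefficients of $a^{-\tb(\La')-1}$ in the Kauffman and HOMFLY polynomials of $K$, respectively, while the hypothesis $R_\La = R^\circ_\La$ asserts the equality of those coefficients at $a^{-\tb(\La)-1}$. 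The existence of the non-oriented ruling of $\La'$ forces the Kauffman bound on $\tb(\La')$ to be sharp by Corollary~\ref{cor: Ruling_TB}, hence $\tb(\La') = \tb(\La)$ whenever $\La$ is itself max-$\tb$, which is the natural setting and the only one in which $R_\La$ is nonzero; this yields the contradiction.

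The main obstacle is in the first step: making the filling-to-ruling assignment genuinely combinatorial and verifying normality at each switch produced by a pinching cobordism. The pinch of Figure~\ref{fig: D4-} sits at a negative crossing whose two local strands must belong to ruling disks in a nested or disjoint configuration, and this compatibility has to be checked directly from the local model together with the arrangement produced by the preceding elementary cobordisms. One also needs to argue that the resulting ruling is independent, up to Legendrian isotopy of $\La'$, of the chosen decomposition of $L$, which I would expect to follow from the local-to-global consistency of the augmentation-to-ruling correspondence induced by the filling.
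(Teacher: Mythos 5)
This lemma is quoted from \cite{CCPRSY} and the paper gives no proof of its own, so there is no in-paper argument to compare against; I will assess your reconstruction directly. Your skeleton is the standard one and is essentially right: a decomposable filling of a representative $\La'$ induces a normal ruling whose switches sit at the pinched crossings; non-orientability forces a switch at a negative crossing, hence an unoriented ruling and $R_{\La'}\neq R^\circ_{\La'}$; since a Legendrian admitting a normal ruling is max-tb (Corollary~\ref{cor: Ruling_TB}) and Rutherford's theorem makes both ruling polynomials depend only on $K$ and $\tb$, this contradicts $R_\La=R^\circ_\La$ in the only non-vacuous case $R_\La\neq 0$, which you correctly isolate (when $R_\La=R^\circ_\La=0$, e.g.\ for a stabilized representative, the statement as quoted is really about the ruling polynomial of the link type).

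Two points need repair. First, your justification that the induced ruling is unoriented --- ``a pinching cobordism by construction sits at a negative crossing'' --- is false as stated: the paper itself performs pinching cobordisms at positive crossings to build \emph{orientable} fillings (Proposition~\ref{prop:m=-2_fillable}, and the Hayden--Sabloff construction behind Theorem~\ref{thm:haydensabloff}). The correct implication runs the other way: a saddle at a positive front crossing joins two coherently oriented strands and so is an oriented $1$-handle attachment, while minima and isotopy traces are always orientation-compatible; hence a decomposable filling all of whose saddles occur at positive crossings is orientable, and a non-orientable decomposable filling must therefore contain at least one saddle at a negative crossing, which becomes an unoriented switch. Second, the filling-to-ruling correspondence is the actual content of \cite{CCPRSY} (extending Atiponrat's orientable case), and it cannot be obtained by ``declaring distinguished crossings'' of the top link in a single pass: saddles occur at intermediate slices separated by isotopies that create, destroy, and permute crossings, so the proof is an induction over the elementary pieces, propagating a normal ruling (together with its orientability defect) upward from the unknot minima through each isotopy and saddle. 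You flag this as the main obstacle, which is fair, but as written it remains the unproved core of the lemma. The closing worry about independence of the chosen decomposition is a non-issue: a single unoriented ruling of $\La'$ suffices to separate $R_{\La'}$ from $R^\circ_{\La'}$, since every ruling contributes a monomial with coefficient $+1$.
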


\section{Max-tb representatives of Quasipositive $3$-braid Closures}\label{sec:max-tb}
Before proving our main theorems, we produce a max-tb Legendrian representative $\La_k(\beta)$ of $\hat{\beta}$ when $m \leq -2$. We will use these Legendrian representatives for obstructing orientable exact Lagrangian fillings of $\hat{\beta}$ when $m < -2$ and constructing non-orientable fillings when $m \leq -2$. Combined with previous results in the case $m \ge -1$ (c.f. Section~\ref{ssec:links_background}), we shall prove Corollary~\ref{cor:max-tb} regarding the maximum Thurston-Bennequin number of quasipositive $3$-braid links.
\subsection{Constructing $\La_k(\beta)$}
Beginning with the Legendrian representative $\La_0(\beta)$, introduced in Section~\ref{ssec:links_background}, we give an iterative destabilization process to produce a sequence $\La_0(\beta), \La_1(\beta), \dots, \La_k(\beta)$ that terminates in a max-tb representative $\La_k(\beta)$ of $\hat{\beta}$ for $k=-\lfloor \frac{m+2}{2}\rfloor$. The key ingredient in this process is a local smooth isotopy that we call a ZZS destabilization drawn in the bottom row of Figure~\ref{fig: DestabilizationMove}. It involves an SZ move and a negative destabilization, which increases the tb of the link by $1$ and decreases its rotation number by $3$. In figures below, we indicate applications of ZZS destabilization by a red box.

\begin{center}
    \begin{figure}[h!]{ \includegraphics[width=.8\textwidth]{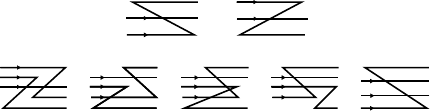}}\caption{(Top) An SZ move. (Bottom) A ZZS destabilization, where the first step is an SZ move and the last step is a negative destabilization.}
    \label{fig: DestabilizationMove}\end{figure}
\end{center}

Recall the notation $\beta=\sigma_1^{k_1}\sigma_2^{k_2}\dots \sigma_1^{k_{r-1}}\sigma_2^{k_r}\Delta^m$ from Equation \ref{eq:garside-normal}.
If $m = -2$, we maintain $\La_0(\beta)$ with further modifications to come below. Otherwise, since $k_1\geq 2$, we begin by rewriting $\beta_+$ as $\sigma_1^2\beta_{1}$ for $\beta_1=\sigma_1^{k_1-2}\sigma_2^{k_2}\dots \sigma_1^{k_{r-1}}\sigma_2^{k_r}\in \Br_3^+$. We can then perform the sequence of isotopies and the ZZS destabilization pictured in the top row in Figure \ref{fig: braid_destab} followed by the isotopy pictured in Figure~\ref{fig: zigzag_rot} (left) to produce $\La_1(\beta)$. Note that in terms of the braid word, the isotopy in Figure~\ref{fig: braid_destab} is equivalent to replacing $\Delta^{-2}\sigma_1^2$ by $\sigma_2^{-1}\sigma_1^{-2}\sigma_2^{-1}$. If $m=-3$ or $-4$, we keep $\La_1(\beta)$ with further modification to follow below. Otherwise, we continue destabilizing $\La_i(\beta)$ in this manner, considering the following four cases at each step:
\begin{enumerate}
    \item $\beta_i=\sigma_1^{2}\beta_{i+1}$ 
    \item $\beta_i=\sigma_2\sigma_1^2\beta_{i+1}$ 
    \item $\beta_i=\sigma_2^2\beta_{i+1}$ 
    \item $\beta_i=\sigma_1\sigma_{2}^2\beta_{i+1}$ 
\end{enumerate}
where $\beta_i\in \Br_3^+$ for all $1\leq i \leq k$. Since $k_i\geq 2$ for $1\leq i \leq r-1$, these cases are exhaustive.
 
\begin{center}
    \begin{figure}[h!]{ \includegraphics[width=.9\textwidth]{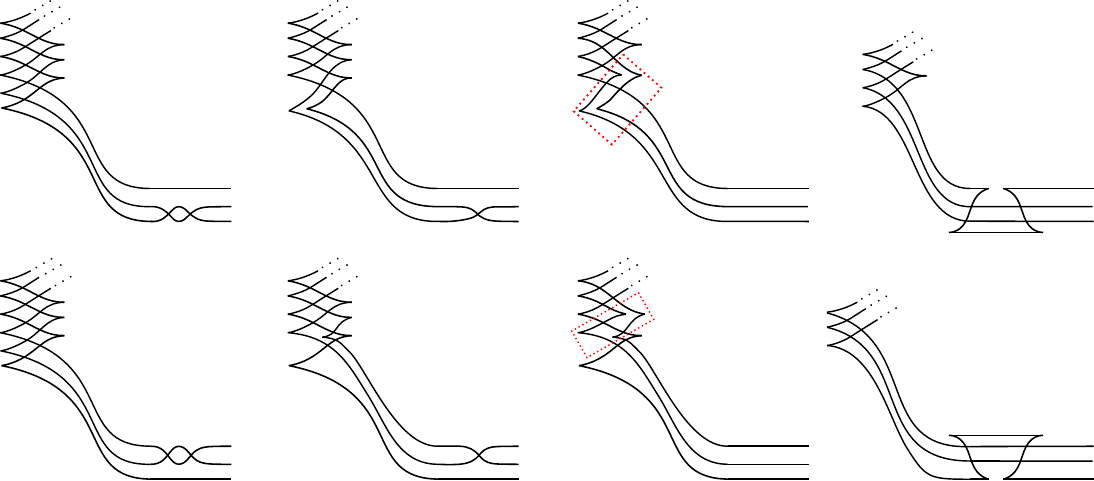}}\caption{Destabilization of $\La_i(\beta), 1\leq i\leq k-1$  when $\beta_i$ begins with either $\sigma_1^2$ or $\sigma_2\sigma_1^2$ (top) or either $\sigma_2^2$ or $\sigma_1\sigma_2^2$ (bottom).}
    \label{fig: braid_destab}\end{figure}
\end{center}

\begin{center}
    \begin{figure}[h!]{ \includegraphics[width=.91\textwidth]{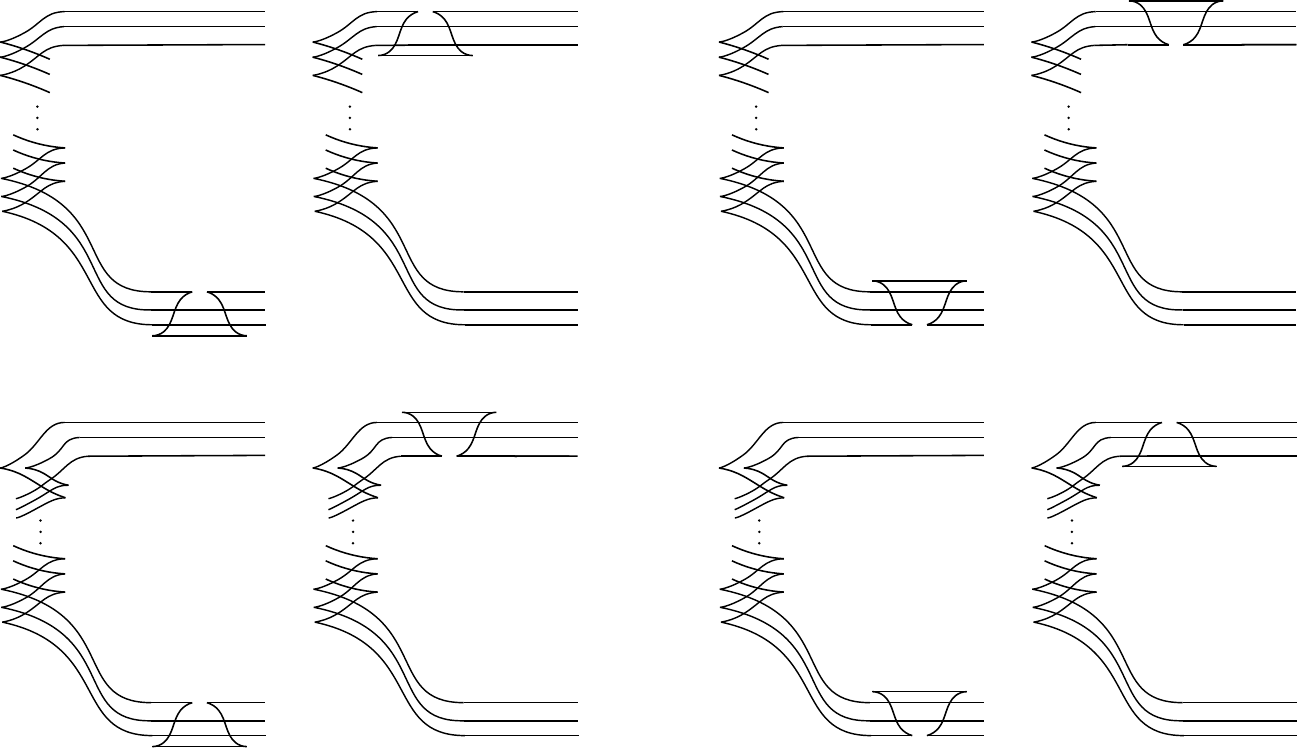}}\caption{Legendrian isotopies performed after applying destabilization given in Figure~\ref{fig: braid_destab} in Cases (1) (left) and (3) (right) when $m$ is even (top) or $m$ is odd (bottom).}
    \label{fig: zigzag_rot}\end{figure}
\end{center}

\begin{center}
    \begin{figure}[h!]{ \includegraphics[width=.9\textwidth]{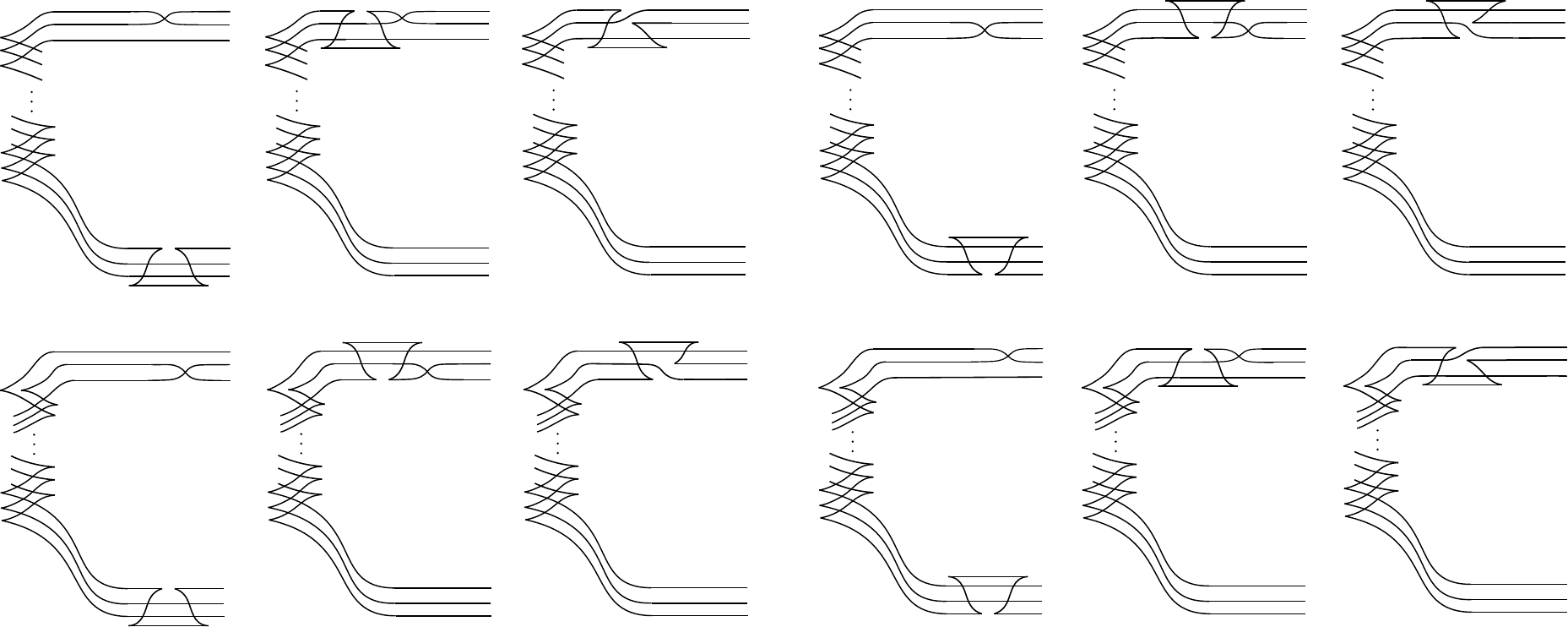}}\caption{Legendrian isotopies performed after applying destabilization given in Figure~\ref{fig: braid_destab} in Cases (2) (left) and (4) (right) when $m$ is even (top) or $m$ is odd (bottom).}
    \label{fig: oddk_i_rotation}\end{figure}
\end{center}
  
 In Case (1), we repeat the same destabilization procedure depicted in the top row of Figure \ref{fig: braid_destab} followed again by the isotopy from Figure \ref{fig: zigzag_rot} (left) to produce $\La_{i+1}(\beta)$. In Case (2), we rotate the crossing $\sigma_2$ to the top of the front diagram, then perform the destabilization procedure depicted in the top row of Figure \ref{fig: braid_destab}, followed by the isotopy from Figure~\ref{fig: oddk_i_rotation} (left) 
 to produce $\La_{i+1}(\beta)$. Similarly, in Case (3), we perform the destabilization procedure depicted in the bottom row of Figure \ref{fig: braid_destab} followed by the isotopy from Figure \ref{fig: zigzag_rot} (right) to produce $\La_{i+1}(\beta)$. In Case (4), we rotate the crossing $\sigma_1$ to the top of the front diagram, then perform the destabilization procedure depicted in the bottom row of Figure \ref{fig: braid_destab}, followed by the isotopy from Figure~\ref{fig: oddk_i_rotation} to produce $\La_{i+1}(\beta)$. 
  
  If $m$ is even, including $m = -2$, we can continue this process until we only have one negative half twist remaining on the left, resulting in $\Lambda_k(\beta)$. In particular, note that each destabilization reduces the number of negative half twists on the left by two. One can readily prove that $\Lambda_k(\beta)$ is a max-tb representative, but for the proof of Proposition \ref{prop: non-orientably fillable} below, we perform an extra Reidemeister III move on the right closure of $\Lambda_k(\beta)$ as depicted in Figure~\ref{fig:  R3_final} (left). This is made possible by the fact that $k_r \ge 2$ in this case. If $\beta_{k-1}$ begins with $\sigma_1^2$ or $\sigma_2\sigma_1^2$. we perform another Reidemeister III move on the left closure of $\Lambda_k(\beta)$ as depicted in Figure~\ref{fig:  R3_final} (center, right). We continue to denote the end result by $\Lambda_k(\beta)$.

    \begin{center}
    \begin{figure}[h!]{ \includegraphics[width=\textwidth]{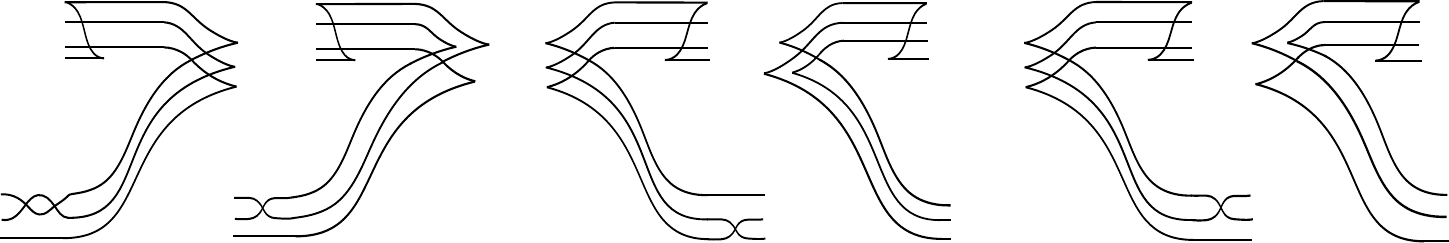}}\caption{Reidemeister III moves performed for the case when $m$ is even. We perform these moves on the left closure only when $\beta_{k-1}$ begins with either $\sigma_1^3$ or $\sigma_2\sigma_1^3$ (center) or  $\sigma_1^2\sigma_2$ or $\sigma_2\sigma_1^2\sigma_2$ (right).}
    \label{fig: R3_final}\end{figure}
    \end{center}
  
  If $m$ is odd, we iterate the process above to obtain $\La_{k-1}(\beta)$ and treat the last iteration differently as follows. If $\La_{k-1}(\beta)$ is of the form described in Case (1) or (2), we apply the destabilization procedure depicted in the top row of Figure~\ref{fig: oddm_destab}. In Case (2), this is done after rotating the crossing $\sigma_1$ and we perform an additional Reidemeeister II move depicted in Figure \ref{fig: final_oddk_i} (left) in the end. If $\La_{k-1}(\beta)$ is of the form described in Case (3) or (4), we apply the destabilization procedure depicted in the bottom row of Figure~\ref{fig: oddm_destab}. Again, in Case (4), this is done after rotating the crossing $\sigma_1$ and we perform an additional Reidemeister II move depicted in Figure \ref{fig: final_oddk_i} (right) in the end.

  \begin{center}
    \begin{figure}[h!]{ \includegraphics[width=.9\textwidth]{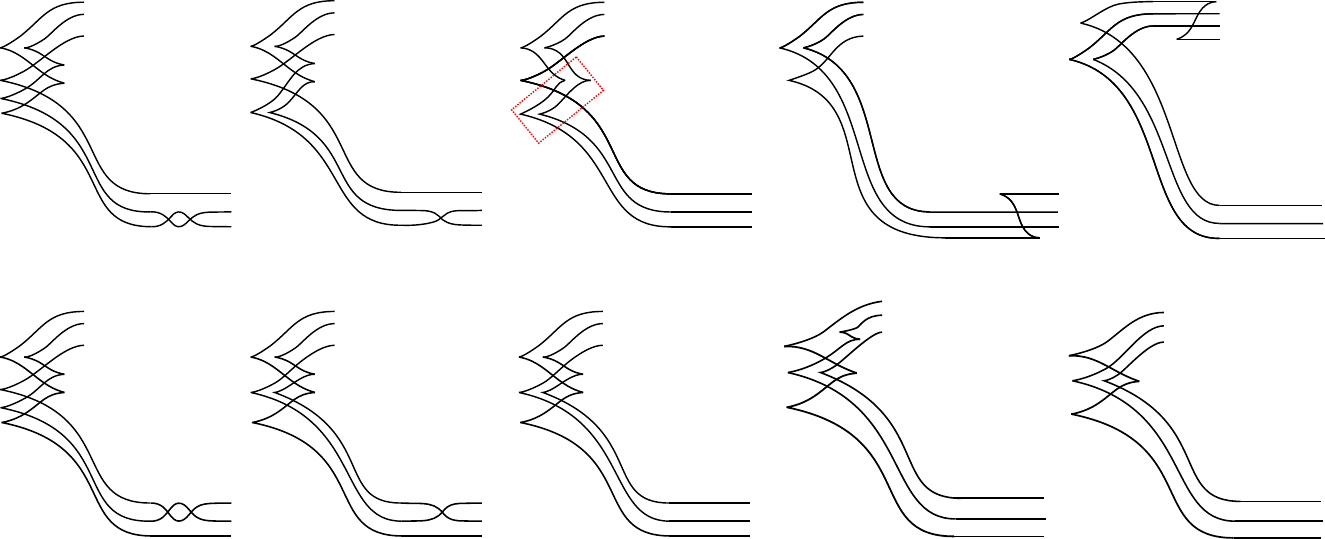}}\caption{Destabilization of $\La_{k-1}(\beta)$ when $m$ is odd and $\beta_{k-1}$ begins with $\sigma_1^2$ or $\sigma_2\sigma_1^2$ (top) and $\sigma_2^2$ or $\sigma_1\sigma_2^2$ (bottom).}
    \label{fig: oddm_destab}\end{figure}
\end{center}

\begin{center}
    \begin{figure}[h!]{ \includegraphics[width=.75\textwidth]{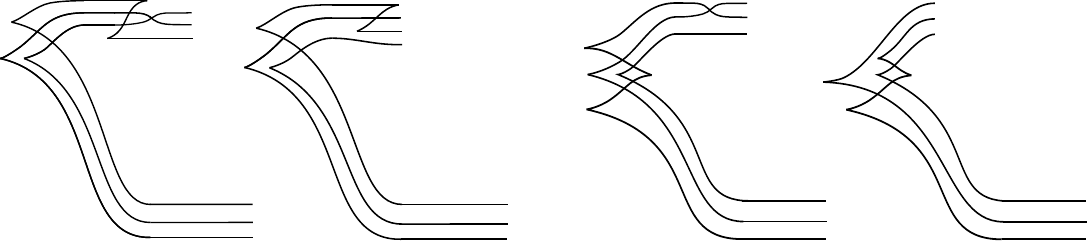}}\caption{Reidemeister II moves performed for case when $m$ is odd and $\beta_{k-1}$ begins with a $\sigma_2\sigma_1^2$ (left) or $\sigma_1\sigma_2^2$ (right).}
    \label{fig: final_oddk_i}\end{figure}
\end{center}
  
  At each step, since $\beta$ is quasipositive, there are at least $|\Delta^{m-2i}|=3(m-2i)$ crossings so that $\beta_i$ is always of the form specified by one of Cases (1)-(4). The end result of this procedure is the Legendrian $\La_k(\beta)$ with front projection given by the positive braid $\beta_k$ and the pieces of the form depicted in Figures~\ref{fig: closure_rulings-2}, ~\ref{fig: closure_rulings_even}, or ~\ref{fig: closure_rulings_odd}, and \ref{fig: zigzag_rulings}: the left and right closures, as well as a sequence of zigzags on the top. See Figure \ref{fig: example_front_ruling} for an example.

   \begin{center}
    \begin{figure}[h!]{ \includegraphics[width=.7\textwidth]{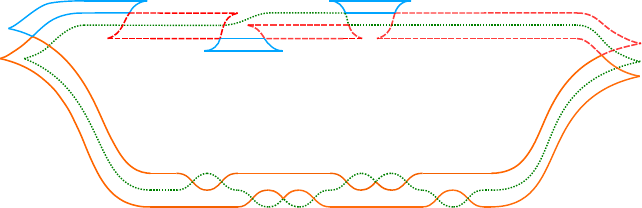}}\caption{A ruling associated to $\La_3(\sigma_1^3\sigma_2^2\sigma_1^2\sigma_2^2\sigma_1^3\sigma_2^3\sigma_1^2\Delta^{-7})$.}
    \label{fig: example_front_ruling}\end{figure}
    \end{center}

Note that, aside from the final destabilization when $m$ is odd and $\beta_{k-1}$ begins with $\sigma_2^2$ or $\sigma_1\sigma_2^2$, each destabilization is a ZZS move and thus decreases rotation number by 3. In consequence, we can compute thet rotation number for our Legendrian representatives as 
$$\rot(\La_k(\beta))= - \left\lfloor \frac{3(m+2)}{2} \right \rfloor+3\left \lfloor \frac{m+4}{2} \right \rfloor-1$$ when $m$ is odd and $\beta_{k-1}=\sigma_2^2\beta_k$ or $\beta_{k-1}=\sigma_1\sigma_2^2\beta_k$, and $$\rot(\La_k(\beta))=- \left \lfloor \frac{3(m+2)}{2} \right \rfloor+3 \left \lfloor \frac{m+2}{2} \right \rfloor$$ 
otherwise. This yields the following:
\begin{align*}
    \tb(\La_k(\beta)) &= 
    |\beta| + m - 1,\\ 
    \rot(\La_k(\beta)) &= \begin{cases}
       \phantom{-}1 & m\ \text{odd}\ \text{and}\ \beta_{k-1} = \sigma_2^2\beta_k\ \text{or}\ \sigma_1\sigma_2^2\beta_k, \\
       \phantom{-}0 & $m$\ \text{ even}, \\
        -1 & \text{otherwise}.
    \end{cases}
\end{align*}

Pending our orientable fillability obstruction in Proposition~\ref{prop:non-fillability} below, the computation of rotation number for $m$ even and strictly less than $-2$ proves Corollary~\ref{cor:rot=0}. 
\subsection{Constructing rulings of $\La_k(\beta)$}
We conclude this section by verifying that $\La_k(\beta)$ is indeed a max-tb Legendrian representative of $\hat{\beta}$.

     \begin{center}
    \begin{figure}[h!]{ \includegraphics[width=0.4\textwidth]{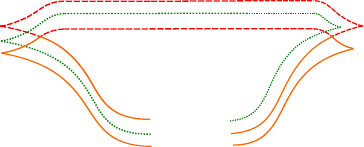}}\caption{Ruling associated to the closure of $\La_0(\beta)$ when $m = -2$.}
    \label{fig: closure_rulings-2}\end{figure}
\end{center}

 \begin{center}
    \begin{figure}[h!]{ \includegraphics[width=0.55\textwidth]{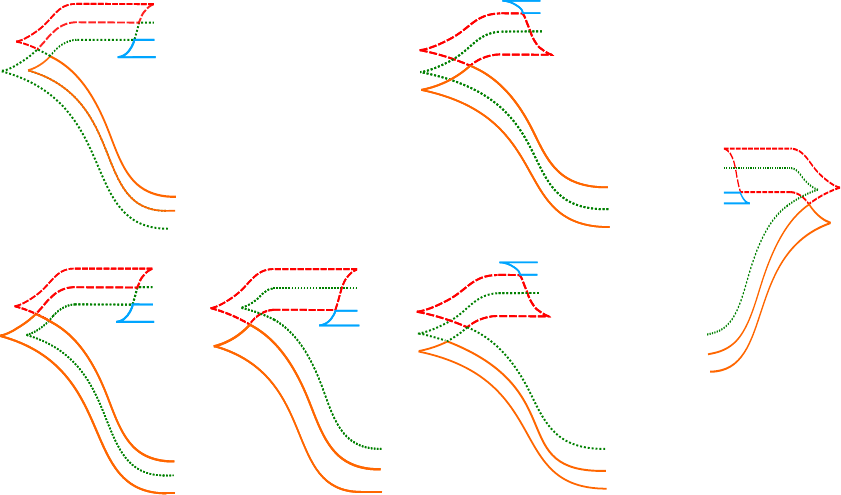}}\caption{Rulings associated to the left and right closures of $\La_k(\beta)$ when $m < -2$ is even. From left to right, the left closures correspond to cases when $\beta_{k-1}$ begins with $\sigma_1^3$ or $\sigma_2\sigma_1^3$, $\sigma_1^2\sigma_2$ or $\sigma_2\sigma_1^2\sigma_2$, and $\sigma_2^2$ or $\sigma_1\sigma_2^2$. Rulings on the top (resp. bottom) row are used if $\beta_k$ begins with $\sigma_1$ (resp. $\sigma_2$).}
    \label{fig: closure_rulings_even}\end{figure}
\end{center}

 \begin{center}
    \begin{figure}[h!]{ \includegraphics[width=\textwidth]{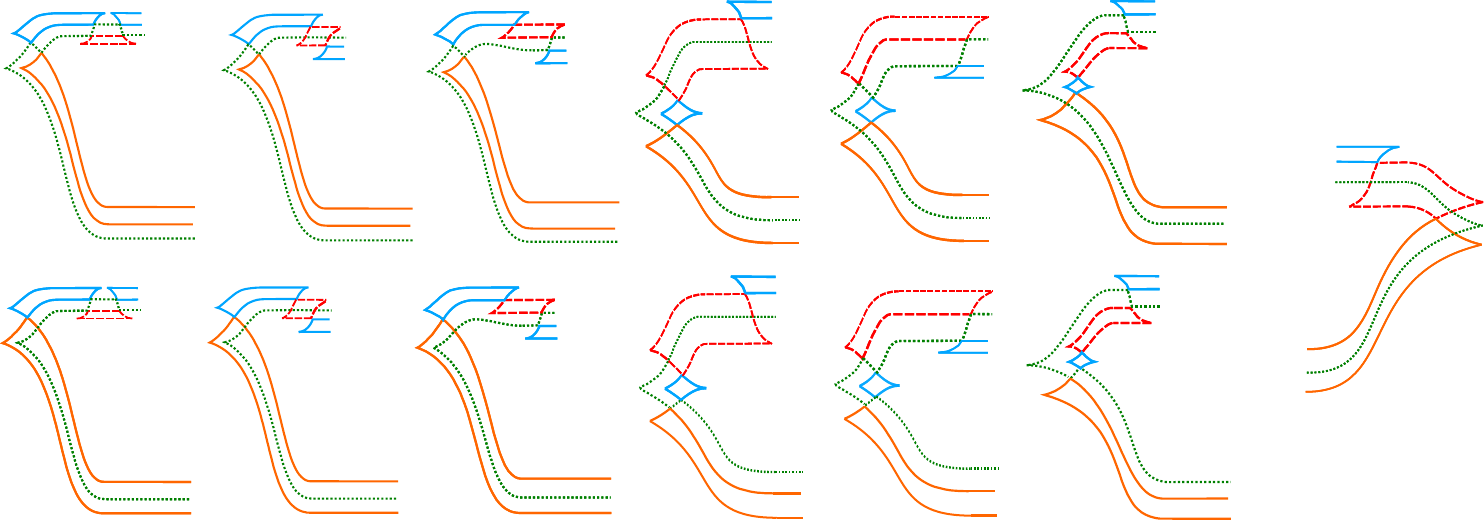}}\caption{Rulings associated to the left and right closures of $\La_k(\beta)$ when $m < -2$ is odd. From left to right, the left closures correspond to cases when $\beta_{k-1}$ begins with $\sigma_1^2$, $\sigma_2\sigma_1^2$, $\sigma_2^2$, and $\sigma_1\sigma_2^2$, where the $\sigma_1^2,\sigma_2^2$ cases are presented with two subcases. Rulings on the top (resp. bottom) row are used if $\beta_k$ begins with $\sigma_1$ (resp. $\sigma_2$).}
    \label{fig: closure_rulings_odd}\end{figure}
\end{center}

\begin{center}
    \begin{figure}[h!]{ \includegraphics[width=.8\textwidth]{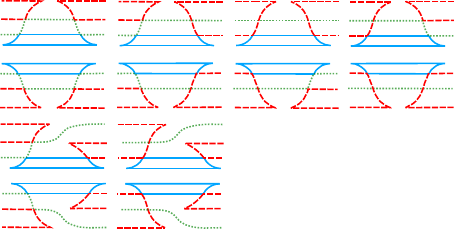}}\caption{Rulings associated to the zigzag patterns on the top of $\La_k(\beta)$. (From top to bottom) When $m$ is even or odd, the zigzag patterns are obtained if $\beta_i$ begins with $\sigma_1^2$ or $\sigma_2^2$,  $\sigma_2^2$ or $\sigma_1^2$, $\sigma_2\sigma_1^2$ or $\sigma_1\sigma_2^2$, and $\sigma_1\sigma_2^2$ or $\sigma_2\sigma_1^2$. The last two columns only appear in the leftmost ($k$th) zigzag.}
    \label{fig: zigzag_rulings}\end{figure}
\end{center}

\begin{proposition}\label{prop: max-tb_ruling}
    For $m \leq -2$, $\La_k(\beta)$ is a max-tb Legendrian representative of $\hat{\beta}$.
\end{proposition}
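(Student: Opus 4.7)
The plan is to apply Corollary~\ref{cor: Ruling_TB}: exhibiting any normal ruling of $\La_k(\beta)$ simultaneously certifies that the Kauffman bound on $\tb$ is sharp and that $\La_k(\beta)$ realizes $\overline{\tb}(\hat\beta)$. A candidate ruling is already drawn in Figures~\ref{fig: closure_rulings-2}--\ref{fig: zigzag_rulings} on every part of $\La_k(\beta)$ except the central positive braid block corresponding to $\beta_k$; see Figure~\ref{fig: example_front_ruling} for a globally assembled example. What remains is therefore to (i) specify a ruling on $\beta_k$ and (ii) verify that the local pieces glue into a globally defined normal ruling.

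For (i), I would take the standard ruling on $\beta_k$ in which every crossing is a switch. Because $\beta_k$ is a positive braid, resolving all of its crossings yields three horizontal strands whose endpoints are canonically labelled by the three ruling disks; every such switch is automatically positive. For (ii), the verification is a case analysis dictated by the four cases (1)--(4) and by the parity of $m$. In each case one reads off from Figures~\ref{fig: closure_rulings-2}, \ref{fig: closure_rulings_even}, \ref{fig: closure_rulings_odd}, and \ref{fig: zigzag_rulings} the pairing of the three strands at the boundary of the corresponding block, and checks that this pairing matches the one inherited from the standard ruling on $\beta_k$. Normality at each switch---either nested or disjoint in the sense of the top row of Figure~\ref{fig: RulingDef}---is then visible from the explicit shape of the drawn disks in each figure. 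Combining the resulting ruling with Corollary~\ref{cor: Ruling_TB} and the computation $\tb(\La_k(\beta)) = |\beta| + m - 1$ performed earlier in Section~\ref{sec:max-tb} gives that $\La_k(\beta)$ is max-tb.

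The main obstacle is the bookkeeping in step (ii): the leftmost zigzag, the left closure, and the right closure all depend separately on the parity of $m$ and on which of the four cases $\beta_{k-1}$ and $\beta_k$ belong to, while the intermediate zigzags further depend on which of cases (1)--(4) produced them. Matching the outgoing strand labels of each local piece to the incoming labels of its neighbor therefore requires traversing a small matrix of subcases. The figures have been drawn precisely so that each such match is forced, and this is where the bulk of the work will lie; once the matching is verified in each subcase, the normality and departure/return conditions of Figure~\ref{fig: RulingDef} are immediate from the local pictures, and the proof is complete.
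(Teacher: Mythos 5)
Your overall strategy is the same as the paper's: exhibit a normal ruling of $\La_k(\beta)$ and invoke Corollary~\ref{cor: Ruling_TB}. However, step (i) of your plan --- taking ``the standard ruling on $\beta_k$ in which every crossing is a switch'' --- does not work here, and this is a genuine gap rather than a bookkeeping issue. The all-switches ruling of a positive braid is the Hayden--Sabloff ruling of the \emph{rainbow} closure, where each of the three braid strands pairs with its own nested closure arc to form three thin disks, so that every crossing involves two distinct disks. For $m \leq -2$ the closure of $\La_k(\beta)$ is not a rainbow closure: the left and right closures contain negative crossings and the top carries zigzags, and the disk structure forced by these closures has only two ruling disks meeting the braid region (a thin disk spanning two of the three strands and a thick disk carrying the third strand around the outside), together with thin disks confined to the zigzags. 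With two of the three braid strands on the same disk, every crossing between those two strands is a self-crossing of that disk and cannot be a switch at all. One cannot escape this by using three separate thin disks either: as noted in the remark following Proposition~\ref{prop:non-fillability}, every normal ruling of $\La_k(\beta)$ for $m<-2$ must switch at least one negative crossing in a closure, which rules out an all-positive-switch ruling. Accordingly, the paper's ruling on the braid region (Figure~\ref{fig: braid_rulings}) switches all crossings in each block $\sigma_i^{k_i}$ \emph{except} the first and last, which serve as departures and returns where the thick disk weaves between the two strands of the thin disk; this weaving is exactly what makes the braid-region ruling compatible with the strand pairings imposed by the closures and zigzags.

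A secondary point: the observation that your switches would be ``automatically positive'' buys you nothing. Corollary~\ref{cor: Ruling_TB} only requires an unoriented normal ruling to conclude tb-maximality, and indeed for $m<-2$ the link admits no oriented ruling at all (this is precisely how Proposition~\ref{prop:non-fillability} obstructs orientable fillability), so any correct proof must produce a ruling with switches at negative crossings. Your step (ii) --- the case analysis matching strand pairings across the local pieces --- is the right remaining work and is essentially what the paper carries out, but it must be performed with the weaving ruling on $\beta_k$ described above, not the all-switches one.
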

\begin{proof}
    By Corollary~\ref{cor: Ruling_TB}, it suffices to show that $\La_k(\beta)$ admits a normal ruling. Given that, by construction, $\La_k(\beta)$ can be broken down into the pieces depicted in Figure~\ref{fig: closure_rulings-2}, ~\ref{fig: closure_rulings_odd}, ~\ref{fig: closure_rulings_even}, \ref{fig: zigzag_rulings}, and \ref{fig: braid_rulings}, we specify the ruling patterns locally in these figures and show that they glue without any conflicts. In particular, we use colors to indicate the boundary of the ruling disks and when two pieces of $\La_k(\beta)$ are glued together, the colors of the overlapping strands ought to match with each other. The end result is a ruling consisting of a collection of thin disks on the top of $\La_k(\beta)$ (in red and blue), a thin disk across the braid region of $\La_k(\beta)$ (in orange), and a thick disk that goes around all regions of $\La_k(\beta)$ (in green). See Figure \ref{fig: example_front_ruling} for an example.

    For $m = -2$, we have $\La_k(\beta) = \La_0(\beta)$ and $\beta_k = \beta \sigma_2^{-1}$. The normal ruling is produced by gluing Figure~\ref{fig: closure_rulings-2} and the leftmost braid pattern on the second row of Figure~\ref{fig: braid_rulings} without any zigzags. By assumption, note that $\beta_k$ begins with at least two $\sigma_1$ crossings. See below for a more detailed account of Figure~\ref{fig: braid_rulings}.

Now, assume $m < -2$. We start by gluing the ruling on the top of $\La_k(\beta)$ from right to left. The process largely follows the constructive recipe of $\La_k(\beta)$ by scanning the word $\beta$ from left to right. Begin with the right closure of $\La_k(\beta)$ on which the ruling is depicted in Figure~\ref{fig: closure_rulings_even} when $m$ is even and Figure~\ref{fig: closure_rulings_odd} when $m$ is odd. Since $\beta$ begins with $\sigma_1^2$, note that the first zigzag pattern attached to the right closure is determined to be on the top two rows of Figure~\ref{fig: zigzag_rulings}. It is on the first (resp. second) row if $m$ is even (resp. odd). Provided that Figure~\ref{fig: closure_rulings_even} or ~\ref{fig: closure_rulings_odd} fixes a ruling pattern on the right half of this zigzag, we choose the piece in the second column of Figure~\ref{fig: zigzag_rulings} to extend the ruling pattern leftwards.

The argument above falls under a general inductive paradigm as follows. For $i < k$, consider the $i$-th zigzag pattern obtained from $\beta_{i-1}$ and fix a ruling pattern on its right half that can be found in Figure~\ref{fig: zigzag_rulings}. Our goal is to complete this ruling pattern by choosing one in Figure~\ref{fig: zigzag_rulings} and extend it to the right half of the $(i+1)$-th zigzag. We consider the four cases of $\beta_{i-1}$ outlined before. In Case (1), the $i$-th zigzag pattern is on the top two rows of Figure~\ref{fig: zigzag_rulings}. It is on the first (resp. second) row if $m$ is even (resp. odd). We choose the unique piece in the first two columns that completes the fixed ruling pattern. Since $k_1,\ldots,k_r \ge 2$, note that $\beta_i$ cannot begin with $\sigma_2\sigma_1^2$ and the ruling pattern extends uniquely to the right half of the $(i+1)$-th zigzag that can be found in Figure~\ref{fig: zigzag_rulings}. Case (3) is completely similar. In this case, note that $\beta_{i}$ cannot begin with $\sigma_1\sigma_2^2$ and the ruling pattern extends uniquely. In Case (2), the $i$-th zigzag pattern is in the bottom two rows of Figure~\ref{fig: zigzag_rulings}. It is in the third (resp. fourth) row if $m$ is even (resp. odd). The fixed ruling pattern is the same on the right half of the two candidates. If $\beta_i$ begins with $\sigma_1^2$ or $\sigma_1\sigma_2^2$ (resp. $\sigma_2^2$), we choose the ruling pattern on the left (resp. right) and it extends uniquely to the right half of the $(i+1)$-th zigzag; $\beta_i$ cannot begin with $\sigma_2\sigma_1^2$. Case (4) is completely similar to Case (2) in the sense of Case (3) being similar to Case (1). Throughout this process, note that we have only used the first two columns of Figure~\ref{fig: zigzag_rulings}.

Proceeding inductively, we obtain a ruling pattern on the right half of the $k$-th zigzag among the ones depicted in Figure~\ref{fig: zigzag_rulings}. In the mean time, Figure~\ref{fig: closure_rulings_even} or ~\ref{fig: closure_rulings_odd} fixes a ruling pattern on the left half of the $k$-th zigzag. For the left closures with two ruling patterns presented, we choose the one on top if $\beta_k$ begins with $\sigma_1$ and the one on bottom otherwise. Note that when $m$ is even and $\beta_{k-1}$ begins with $\sigma_1^2\sigma_2$ or $\sigma_2\sigma_1^2\sigma_2$, $\beta_k$ necessarily begins with $\sigma_2$. By inspection, we see that the left closures extend the ruling pattern on the right half of the $k$-th zigzag in all cases without any conflicts. The resulting ruling pattern on the $k$-th zigzag is among those depicted in Figure~\ref{fig: zigzag_rulings}.

\begin{center}
    \begin{figure}[h!]{ \includegraphics[width=\textwidth]{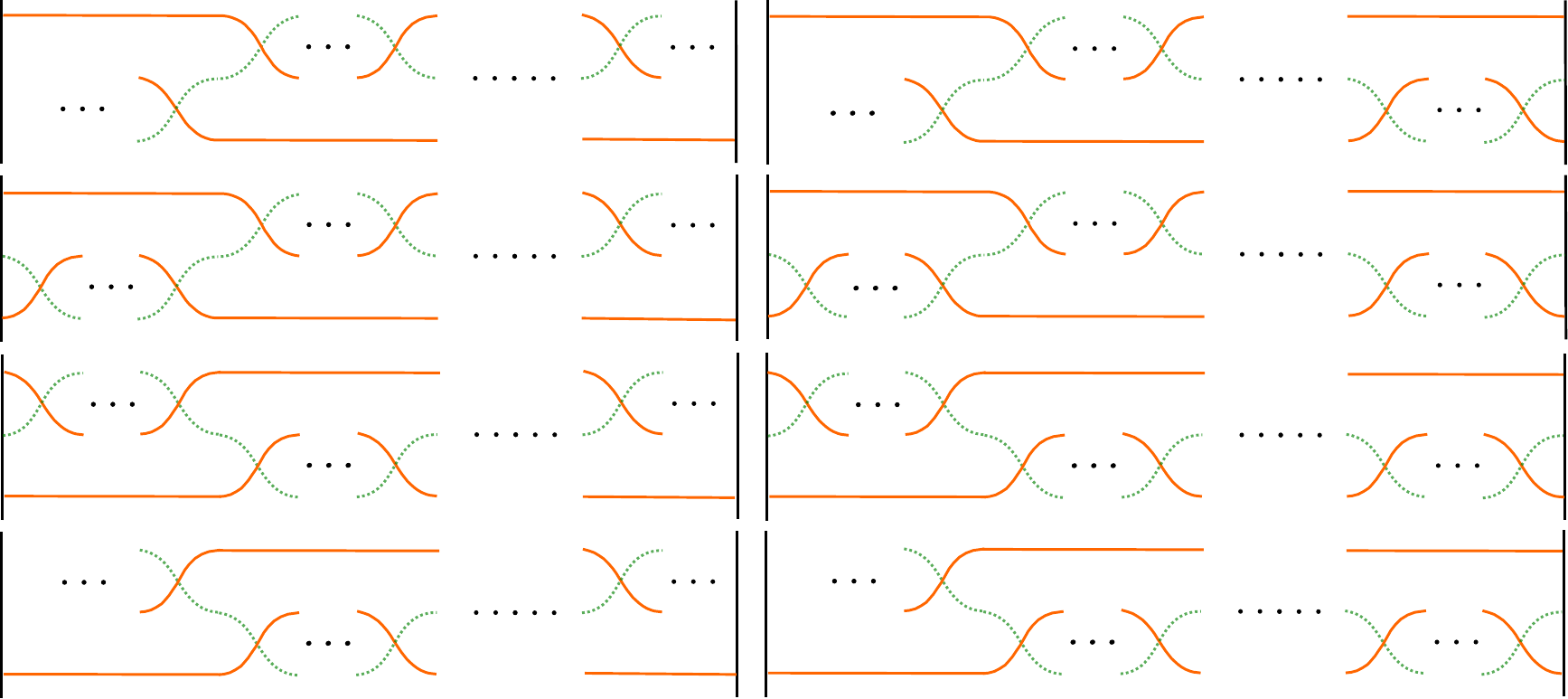}}\caption{Rulings associated to the braid region of $\La_k(\beta)$ when $m$ is even (left) and odd (right). The ellipses within each block $\sigma_i^{k_i}$ indicate switches.}
    \label{fig: braid_rulings}\end{figure}
\end{center}

It remains to fit a ruling pattern of the braid region $\beta_k$ into the picture. Such a pattern is given in Figure~\ref{fig: braid_rulings} for all possible configurations of $\beta_k$, consisting of only a green path and an orange path. Unless otherwise stated in the figure, in each block $\sigma_i^{k_i}$ of $\beta_k$, all crossings except the first and the last ones are switches indicated by an ellipsis. However, note that the first and last block of $\beta_k$ may begin or end with a number of switches respectively. Again, an inspection verifies that the ruling patterns agree with those on the closures in Figure~\ref{fig: closure_rulings_even} or ~\ref{fig: closure_rulings_odd} in all cases. In particular, when the green path enters (resp. exists) in the middle of the two orange paths, we are ensured that the first (resp. last) block of $\beta_k$ contains at least two crossings.

In summary, we obtain a collection of embedded disks with boundary on $\La_k(\beta)$, as claimed above. Further, all switches are disjoint except those at $\sigma_2$ crossings in the middle of $\beta_k$, which are nested. Thus, we obtain a normal ruling of $\La_k(\beta)$ and by Corollary~\ref{cor: Ruling_TB}, this proves that $\La_k(\beta)$ is a max-tb representative of its smooth link type $\hat{\beta}$.
\end{proof}

\section{Orientable Fillability of $3$-Braid Closures}
\label{sec: orientable-fill}

We are now ready to discuss exact Lagrangian fillability of (quasipositive) $3$-braid closures $\hat{\beta}$. In this section, we complete the proof of Theorem~\ref{thm: fillability} by showing that $\hat{\beta}$ is orientably fillable if $m = -2$ and not if $m < -2$. We begin with the construction of orientable fillings.

\begin{proposition}
    \label{prop:m=-2_fillable}
    $\hat{\beta}$ is orientably exact Lagrangian fillable if $m = -2$.
\end{proposition}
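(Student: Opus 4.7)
The plan is to construct an explicit decomposable orientable exact Lagrangian filling of the max-tb representative $\Lambda_0(\beta) = \Lambda_k(\beta)$ by a sequence of pinch cobordisms at a distinguished set of crossings of $\beta_+$, followed by capping off with minimum disks.

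First, by Orevkov's criterion (Theorem~\ref{thm:quasipositive-criterion}), since $\beta$ is quasipositive with $m=-2$, the positive word $\beta_+$ admits a deletion of $\ell := |\beta_+| - 6$ letters after which the remaining subword equals $\Delta^2$ in $\Br_3^+$. I will refer to these $\ell$ distinguished letters as the \emph{extra} crossings of $\beta_+$; note that $\ell \ge 0$ since a single copy of $\Delta^2$ already has word length $6$.

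The filling is assembled in two stages, which I describe top-down. Starting from $\Lambda_0(\beta)$, I apply an orientable pinching cobordism (Figure~\ref{fig: D4-}) at each extra crossing in turn, replacing each such positive crossing with its oriented resolution. After all $\ell$ pinches, the resulting Legendrian is the closure of a braid word Artin-equivalent to $\Delta^2 \cdot \Delta^{-2}=1$, and is therefore Legendrian isotopic to a disjoint union of three max-tb Legendrian unknots. I then cap each of these unknots with a minimum disk cobordism (Figure~\ref{fig: minima_saddle}, left). The concatenation is a decomposable orientable exact Lagrangian filling $L$ of $\Lambda_0(\beta)$, which by Corollary~\ref{cor: Ruling_TB} and Proposition~\ref{prop: max-tb_ruling} is already known to be max-tb, so the filling has the expected topology.

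As a consistency check, the Euler characteristic of the filling is $\chi(L) = 3 - \ell = 9 - |\beta_+|$, matching $-\tb(\Lambda_0(\beta)) = -(|\beta| + m - 1) = -(|\beta_+| - 9) = 9 - |\beta_+|$ as required for an exact Lagrangian filling.

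The main technical challenge will be bookkeeping: at each stage one must ensure that the next extra crossing can be brought into a position where the pinch cobordism applies cleanly in the front projection. This requires interleaving Legendrian isotopies (Reidemeister~III moves corresponding to Artin relations in $\Br_3^+$, together with cyclic conjugations permitted because we work with braid closures) between successive pinches, so that after the full sequence of $\ell$ pinches the remaining word really does reduce to $\Delta^2 \cdot \Delta^{-2}$ and not to some braid-inequivalent obstruction. Orientability of each pinch is automatic, since the two strands meeting at any crossing in a braid point in the same direction and Figure~\ref{fig: D4-} is an orientable saddle in that configuration.
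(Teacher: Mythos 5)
Your proposal is correct and follows essentially the same route as the paper: both construct a decomposable orientable filling by applying pinching cobordisms at positive crossings of $\beta_+$ until the braid word reduces to $\Delta^2\Delta^{-2}$, then cap the resulting three unlinked max-tb unknots with minimum disks. The only cosmetic difference is that you select the crossings to pinch directly from Orevkov's deletion criterion, whereas the paper first pinches down to the explicit base case $\La_0(\sigma_1^2\sigma_2^2\sigma_1^2\sigma_2^2\Delta^{-2})$ using the bound $r\ge 4$.
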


\begin{proof}
It suffices to construct an orientable exact Lagrangian filling of $\La_0(\beta)$. In the notation of Equation~\ref{eq:garside-normal}, note that according to Orevkov's criterion of quasipositivity, $r \ge 4$ when $m = -2$. Indeed, one can determine by direct inspection of braids equivalent to $\Delta^2$ that no braid of the form $\sigma_1^{k_1}\sigma_2^{k_2}\sigma_1^{k_1}$ can be reduced to a braid equivalent to $\Delta^2$ by removing any number of crossings. In the case of $r=4$, we observe that when $\beta=\sigma_1^{k_1}\sigma_2^{k_2}\sigma_1^{k_3}\sigma_2^{k_4}\Delta^{-2}$, the Legendrian $\La_0(\beta)$ admits an exact Lagrangian filling constructed by applying the local model in Figure~\ref{fig: D4-} to pinch the crossings numbered $3, \dots, k_1$ in the first block, $2, \dots, k_2$ in the second block, $3, \dots, k_3$ in the third block, and $2, \dots, k_4$ in the last block. Applying a Reidemeister III move to the resulting $\sigma_1^2\sigma_2\sigma_1^2\sigma_2$ results in $\La_0(\Delta^2\Delta^{-2})$, which is isotopic to three unlinked max-tb Legendrian unknots. 
For $r> 4$, we can construct a filling by first performing the sequence of pinchings as in the $r=4$ case, yielding a cobordism to $\La_0(\Delta^2\sigma_1^{k_5}\dots \sigma_2^{k_r}\Delta^{-2}).$ We then inductively eliminate the $k_i$th block, starting with $i=5$ and proceeding in order to $i=r$, by performing the following steps: first, use braid equivalences to write $\Delta^2=\Delta\sigma_1\sigma_2\sigma_1$ when $i$ is odd and $\Delta^2=\Delta\sigma_2\sigma_1\sigma_2$ when $i$ is even. Then, pinch all of the crossings in the $k_i$th block, using the rightmost crossing in $\Delta^2$ as the left crossing in the local model of Figure~\ref{fig: D4-}. After inductively eliminating each of the blocks in this fashion, we again arrive at $\La_0(\Delta^2\Delta^{-2})$, which we fill in with exact Lagrangian disks.
\end{proof}

Next, we turn to the obstructive part of Theorem \ref{thm: fillability}, namely,

\begin{proposition}\label{prop:non-fillability}
    $\hat{\beta}$ is not orientably exact Lagrangian fillable if $m < -2$.
\end{proposition}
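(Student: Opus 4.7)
The plan is to prove Proposition~\ref{prop:non-fillability} by showing that the HOMFLY polynomial bound on $\overline{\tb}(\hat{\beta})$ fails to be sharp when $m<-2$, which obstructs orientable fillability via Lemma~\ref{lem:orientable-obs}. By Proposition~\ref{prop: max-tb_ruling}, $\overline{\tb}(\hat{\beta})=\tb(\La_k(\beta))=|\beta|+m-1$, so the task reduces to establishing the strict inequality $-d_{P_{\hat{\beta}}}>|\beta|+m-1$.

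For $m$ odd the argument is immediate from the Fuchs--Tabachnikov inequality applied to $\La_k(\beta)$. Since Section~\ref{sec:max-tb} computes $|\rot(\La_k(\beta))|=1$ in this case,
\[
-d_{P_{\hat{\beta}}}\;\ge\;\tb(\La_k(\beta))+|\rot(\La_k(\beta))|\;=\;|\beta|+m\;>\;|\beta|+m-1\;=\;\overline{\tb}(\hat{\beta}),
\]
giving the desired strict gap and hence HOMFLY non-sharpness.

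For $m$ even, the argument is more delicate since $\rot(\La_k(\beta))=0$ and Fuchs--Tabachnikov alone yields only the non-strict inequality $-d_{P_{\hat{\beta}}}\ge|\beta|+m-1$. Here the plan is to argue directly that $\La_k(\beta)$ admits no oriented normal ruling; by Corollary~\ref{cor: Ruling_TB}, this upgrades the Kauffman sharpness established in Proposition~\ref{prop: max-tb_ruling} to strict HOMFLY non-sharpness at $\tb=\overline{\tb}(\hat{\beta})$. To rule out oriented rulings, I would reuse the inductive right-to-left sweep from the proof of Proposition~\ref{prop: max-tb_ruling}: the local patterns in Figures~\ref{fig: closure_rulings_even}, \ref{fig: zigzag_rulings}, and \ref{fig: braid_rulings} enumerate the compatible choices of ruling disks, and by inspecting the strand orientations one verifies that every globally consistent normal ruling must include at least one switch at a negative crossing arising from the left closure or a zigzag block.

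The main obstacle is the combinatorial bookkeeping for the $m$ even case: while each local orientation check is elementary, the verification must be carried out for every configuration of $\beta_{k-1}$ and $\beta_k$ permitted by the Garside normal form and Orevkov's quasipositivity criterion, and in each subcase one must show that no admissible assignment of switches avoids all the negative crossings in the zigzag and closure regions. The $m$ odd case, by contrast, bypasses this bookkeeping entirely by exploiting the nonvanishing rotation number, which is why the novel examples of Corollary~\ref{cor:rot=0} arise precisely when $m$ is even.
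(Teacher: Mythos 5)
Your argument for $m$ odd is complete and correct: applying Fuchs--Tabachnikov to $\La_k(\beta)$ with $|\rot(\La_k(\beta))|=1$ does give $-d_{P_{\hat{\beta}}}\ge |\beta|+m>\overline{\tb}(\hat{\beta})$. But the $m$ even case --- precisely the case that produces the rotation-number-zero examples of Corollary~\ref{cor:rot=0} --- is left as an unexecuted plan, and that is a genuine gap. Proving that $\La_k(\beta)$ admits \emph{no} oriented normal ruling is a much heavier task than the one carried out in Proposition~\ref{prop: max-tb_ruling}: there one only needs to exhibit a single ruling, whereas here one must control \emph{every} admissible assignment of switches across all configurations of the zigzag, closure, and braid regions permitted by the Garside form and Orevkov's criterion. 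The paper records this ruling-obstruction route only as a remark and does not carry it out either, so as written your proposal does not establish the proposition for $m$ even.

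The missing idea that removes the parity split entirely is to apply Fuchs--Tabachnikov not to the max-tb representative $\La_k(\beta)$ but to the stabilized initial front $\La_0(\beta)$, whose rotation number is large: since $\tb(\La_0(\beta))=|\beta|+\lfloor 3m/2\rfloor$ and $|\rot(\La_0(\beta))|=-\lfloor 3(m+2)/2\rfloor$, one gets
\[
-d_{P_{\hat{\beta}}}\;\ge\;\tb(\La_0(\beta))+|\rot(\La_0(\beta))|\;=\;|\beta|-3\;>\;|\beta|+m-1\;=\;\overline{\tb}(\hat{\beta})
\]
for every $m<-2$, of either parity. Equivalently, each ZZS destabilization raises $\tb$ by $1$ but lowers $|\rot|$ by $3$, so $\tb+|\rot|$ strictly decreases along the sequence $\La_0(\beta),\dots,\La_k(\beta)$, and the lower bound on $-d_{P_{\hat{\beta}}}$ furnished by the stabilized representative already exceeds $\overline{\tb}(\hat{\beta})$. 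This is the paper's actual argument; it makes the ruling enumeration unnecessary and subsumes your odd case as well.
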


\begin{proof}
    Using the max-tb Legendrian representatives $\La_k(\beta)$ of $\hat{\beta}$ found in Section~\ref{sec:max-tb}, we prove explicitly that the HOMFLY bound on $\overline{\tb}(\hat{\beta})$ is not sharp, which obstructs orientable fillability by Lemma \ref{lem:orientable-obs}. Recall from our earlier computations in Section~\ref{ssec:links_background} and Section~\ref{sec:max-tb} that
    \begin{align*}
        \tb(\La_k(\beta)) - \tb(\La_0(\beta))  = - \left \lfloor \frac{m+2}{2} \right \rfloor, \\
        |\rot(\La_k(\beta))| - |\rot(\La_0(\beta))|  \leq 3\left \lfloor \frac{m+2}{2} \right \rfloor,
    \end{align*}
    Thus, for $m < -2$,
    \begin{equation*}
    \overline{\tb}(\hat{\beta}) = \tb(\La_k(\beta)) \leq \tb(\La_k(\beta)) + |\rot(\La_k(\beta))| <  \tb(\La_0(\beta)) + |\rot(\La_0(\beta)| \leq d_{P_K},
    \end{equation*}
    and the HOMFLY bound on $\overline{\tb}(\hat{\beta})$ is not sharp, as desired.
\end{proof}

\begin{remark}
    Rather than computing the HOMFLY bound directly, one could also observe that for $m<-2$, the Legendrian $\La_k(\beta)$ admits no orientable rulings as each ruling must have a switch at at least one of the negative crossings in the left or right closure. The desired result then follows from the fact that orientably fillable Legendrian links must admit an orientable ruling.
\end{remark}

We conclude this section by a comment about our orientable filling construction from the Legendrian weave perspective, which we then use to conclude Corollary~ \ref{cor: weaves}. We refer readers to \cite[Section 2]{CasalsZaslow} for necessary preliminaries on Legendrian weaves. 

\begin{corollary}
$\La_0(\beta\Delta^{m})$ admits an exact Lagrangian filling obtained as the Lagrangian projection of a Legendrian weave for all $m\geq -2$. 
\end{corollary}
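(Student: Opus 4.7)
The plan is to verify, case by case according to the value of $m \geq -2$, that the orientable exact Lagrangian filling of the max-tb representative of $\hat{\beta}$ constructed in Section~\ref{sec: orientable-fill} (or, for $m = -1$, in the background discussion) arises as the Lagrangian projection of a Legendrian weave in the sense of \cite[Section 2]{CasalsZaslow}. The case $m = -1$ is already addressed in the paragraph preceding the corollary: the Legendrian is the $(-1)$-closure of the positive braid $\beta_+\Delta$, the filling produced in \cite{CGGLSS} is encoded by a graph, and the modification from \cite[Definition 7.2]{CasalsNho25} promotes this graph to the 1-skeleton of a Legendrian weave.

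For $m \geq 0$, the Legendrian is the rainbow closure of the positive braid $\beta_+$. A short Legendrian isotopy pulls each outer strand of the rainbow across a half-twist and exhibits this Legendrian as the $(-1)$-closure of a positive braid of the form $\Delta \cdot \beta_+ \cdot \Delta$ (up to conjugation). On this $(-1)$-closure, the standard Hayden-Sabloff filling, obtained by pinching every crossing of $\beta_+$ and capping off the resulting max-tb unknots, is precisely the Lagrangian projection of the Legendrian weave with one trivalent vertex at each crossing of $\beta_+$ (colored according to the subscript of the pinched generator) and no hexavalent vertices, following the basic example spelled out in \cite[Section 2.1]{CasalsZaslow}.

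The main case to address is $m = -2$. The filling constructed in Proposition~\ref{prop:m=-2_fillable} proceeds by pinching a selected subset of the crossings of $\beta_+$ together with braid moves to reduce first to $\La_0(\sigma_1^2\sigma_2^2\sigma_1^2\sigma_2^2\Delta^{-2})$, then pinching two further crossings and applying one more braid move to arrive at $\La_0(\Delta^2\Delta^{-2})$, a Legendrian isotopic to three unlinked max-tb unknots each bounded by a Lagrangian disk. The plan is to first rewrite $\La_0(\beta)$ as the $(-1)$-closure of a positive braid, absorbing the two negative half-twists of $\Delta^{-2}$ into the framing of the closure arcs in direct analogy with the $m = -1$ rewriting. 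Each pinching operation then corresponds to a trivalent vertex of a Legendrian weave on this $(-1)$-closure, each Reidemeister III move corresponds to a hexavalent vertex, and the three terminal unknots correspond to the three closed complementary regions of the weave that bound embedded Lagrangian disks. Assembling these combinatorial data produces a Legendrian weave whose Lagrangian projection is exactly the filling constructed in Proposition~\ref{prop:m=-2_fillable}.

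The main obstacle will be the first step in the $m = -2$ case: explicitly rewriting $\La_0(\beta)$ as a $(-1)$-closure of a positive braid in a way compatible with the pinch-and-isotopy sequence from Proposition~\ref{prop:m=-2_fillable}, so that every pinch lands on a crossing of the resulting positive braid and every Reidemeister III is localized as a hexavalent configuration. Once this rewriting is fixed, the verification that the assembled combinatorial object is an honest Legendrian weave with the claimed Lagrangian projection reduces to a local check at each vertex using the dictionary of \cite[Section 2]{CasalsZaslow}.
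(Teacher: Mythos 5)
Your proposal is correct and follows essentially the same route as the paper: the filling of Proposition~\ref{prop:m=-2_fillable} is translated into a weave by matching pinching cobordisms with trivalent vertices and braid (Reidemeister III) isotopies with hexavalent vertices, after presenting the Legendrian as a $(-1)$-closure of a positive braid, while the $m\geq -1$ cases are handled by the weave constructions of \cite{CGGLSS}. You supply somewhat more detail than the paper does (notably the explicit $(-1)$-closure rewriting and the $m\geq 0$ case), but the underlying argument is the same.
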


\begin{proof}
    The exact Lagrangian fillings of Proposition \ref{prop:m=-2_fillable} are constructed by using braid isotopies and pinching cobordisms to produce an orientable exact Lagrangian cobordism to the unlink. In the language of \cite[Section 2]{CasalsZaslow}, these braid isotopies correspond to hexavalent vertices, while the pinching cobordisms correspond to trivalent vertices. For $m\geq -1$, the orientable exact Lagrangian fillings of \cite{CGGLSS} discussed in Subsection~\ref{ssec:background-fillings} are constructed as the Lagrangian projection of Legendrian weaves. 
\end{proof}

\section{Non-orientable Fillability of Quasipositive $3$-Braid Closures}
\label{sec:non-orientable-fill}

In this section, we characterize non-orientable fillability of quasipositive 3-braid closures, proving Theorem~\ref{thm: intro_non-orientable_fillability}. First, we extend Theorem~\ref{thm:ccpr+} from braid positive Legendrians to the case of $m=-1$ and prove the obstructive part of Theorem~\ref{thm: intro_non-orientable_fillability}. We do so following the proof technique in \cite{CCPRSY}, namely showing that every normal ruling of $\La_0(\beta)$ is orientable.

\begin{lemma}\label{lem: rulings_positive}

Every normal ruling of $\La_0(\beta_+\Delta^{-1})$ is orientable. 
\end{lemma}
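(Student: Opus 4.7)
The plan is to prove the lemma directly by showing that every switch of every normal ruling of $\La_0(\beta_+\Delta^{-1})$ occurs at a positive crossing. First I would fix the front projection of $\La_0(\beta_+\Delta^{-1})$ as the $(-1)$-closure of $\beta_+\Delta$ shown in Figure~\ref{fig: rainbow} (right). Since $\beta_+\Delta\in\Br_3^+$, the horizontal braid region contains only positive crossings, so every negative crossing in the diagram is located in the closure arcs that implement the $(-1)$ framing; in the 3-braid case this is a small, explicit set of crossings whose local structure can be read off directly from the diagram.

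Next I would perform a local analysis at each such negative crossing to rule out it being a switch. Any normal ruling $\rho$ partitions the cusps into pairs and assigns to each pair a ruling disk whose boundary closes up via a sequence of switches. Suppose toward contradiction that $\rho$ has a switch at a negative crossing in the closure region. By normality, the two ruling disks incident to this switch must be disjoint or completely nested near the crossing, as in the top row of Figure~\ref{fig: RulingDef}. I would then trace the two strands forward through the remainder of the diagram and show that this configuration forces either a single ruling disk to self-intersect or two companion disks to overlap in a manner that is neither nested nor disjoint, contradicting either embeddedness of the disks or the normality condition itself.

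The main obstacle is organizing the case analysis cleanly, which I would handle by first classifying the possible cusp pairings at the right closure, then using the positivity of $\beta_+\Delta$ to constrain how these pairings propagate through the braid region and close up at the left cusps. Once switches at every negative crossing in the closure have been ruled out, every switch of $\rho$ lies at a positive crossing and hence $\rho$ is oriented, yielding the lemma. As a byproduct this gives $R_{\La_0(\beta_+\Delta^{-1})}(z)=R^\circ_{\La_0(\beta_+\Delta^{-1})}(z)$, which will combine with Lemma~\ref{lem: non-orientable obstruction} in the sequel to obstruct decomposable non-orientable fillings when $m=-1$.
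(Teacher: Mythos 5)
Your proposal follows essentially the same route as the paper: locate all negative crossings of the front in the closure region, rule out switches there by showing the nesting/disjointness requirement of normality would be violated, and conclude that every switch is positive so every ruling is oriented. The paper organizes the same inspection slightly more efficiently by using the front with a single negative half twist on the right and no crossings on the left, observing that every ruling disk must then pass through both the top and bottom strands and hence any disk with negative switches has exactly two of them, which reduces the final check to finitely many immediate cases.
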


\begin{proof}
    $\La_0(\beta_+\Delta^{-1})$ has one negative half twist on the right of the front diagram and no negative crossings on the left. This guarantees that all of the three ruling disks must be thick disks passing through both the top and bottom set of strands of the front projection. Inspecting the ruling disks in order of the positions of their left cusps, from the outermost toward the innermost, we see that the first disk must end at the top cusp on the right without switches at any negative crossings, as the behavior at the leftmost cusp forces the lower half of the disk to pass through the braid. Repeating this argument iteratively, it follows that the second and third disk must end at the middle and bottom cusp on the right, respectively, without switches at any negative crossings.
\end{proof}

Note that the above result and method of proof hold for braid index greater than three as well. 

\begin{proposition}
    $\hat{\beta}$ is not non-orientably decomposably fillable if $m = -1$. 
\end{proposition}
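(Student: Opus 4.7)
The plan is to apply the ruling obstruction of Lemma~\ref{lem: non-orientable obstruction} to a conveniently chosen Legendrian representative of $\hat{\beta}$ when $m = -1$. The natural candidate is the $(-1)$-closure $\La_0(\beta_+\Delta^{-1})$ of $\beta_+\Delta$ described in Section~\ref{ssec:links_background}, which is a max-tb representative of $\hat{\beta}$ by the filling constructions of \cite{CGGLSS} together with Corollary~\ref{cor: Ruling_TB}. In particular, this Legendrian admits at least one normal ruling, so the ruling polynomial is not vacuously zero.

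Next, I would invoke Lemma~\ref{lem: rulings_positive}, which guarantees that every normal ruling of $\La_0(\beta_+\Delta^{-1})$ is orientable. As a consequence, the set of normal rulings and the set of oriented normal rulings coincide, so
$$R_{\La_0(\beta_+\Delta^{-1})}(z) = R^\circ_{\La_0(\beta_+\Delta^{-1})}(z).$$
Feeding this identity into Lemma~\ref{lem: non-orientable obstruction} immediately yields that no Legendrian representative of the smooth link $\hat{\beta}$ admits a decomposable non-orientable exact Lagrangian filling, finishing the proof.

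There is really no obstacle here: the bulk of the work is already carried out in Lemma~\ref{lem: rulings_positive}, whose proof exploits the very rigid structure of the $(-1)$-closure (a single negative half twist forces all ruling disks to be thick, and any such disk with negative switches would produce an illegal nesting). The proposition is therefore essentially a two-line deduction chaining Lemma~\ref{lem: rulings_positive} into Lemma~\ref{lem: non-orientable obstruction}, and its main purpose in the paper is to extend the obstructive half of Theorem~\ref{thm:ccpr+} from the braid-positive case ($m \geq 0$) to the borderline case $m = -1$, thereby completing the obstructive direction of Theorem~\ref{thm: intro_non-orientable_fillability} for all $m \geq -1$.
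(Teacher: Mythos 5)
Your proposal is correct and follows exactly the paper's argument: the paper also deduces the proposition immediately by combining Lemma~\ref{lem: rulings_positive} (every normal ruling of $\La_0(\beta_+\Delta^{-1})$ is orientable, so $R_\La = R^\circ_\La$) with the ruling obstruction of Lemma~\ref{lem: non-orientable obstruction}. No differences worth noting.
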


\begin{proof}
    This follows immediately from Lemma~\ref{lem:  rulings_positive} and Lemma~\ref{lem: non-orientable obstruction}.
\end{proof}

Finally, we construct a non-orientable exact Lagrangian filling of $\hat{\beta}$ for $m \leq -2$.

\begin{proposition}\label{prop: non-orientably fillable}
$\hat{\beta}$ is non-orientably fillable if $m \leq -2$. 
\end{proposition}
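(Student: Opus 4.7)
The plan is to construct a decomposable non-orientable exact Lagrangian filling of the max-tb representative $\La_k(\beta)$ from Section~\ref{sec:max-tb} using the normal ruling $\rho$ produced in Proposition~\ref{prop: max-tb_ruling}. Following the strategy of \cite{HaydenSabloff, CCPRSY}, we will pinch at every switch of $\rho$, which decomposes $\La_k(\beta)$ into a disjoint union of max-tb Legendrian unknots; each unknot is then capped off with an orientable minimum disk as in Figure~\ref{fig: minima_saddle}. Non-orientability of the resulting filling will follow from the presence of pinches at negative crossings.

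The first step is to confirm that $\rho$ is unoriented, i.e., has at least one switch at a negative crossing. This is visible in Figures~\ref{fig: closure_rulings-2}, \ref{fig: closure_rulings_even}, and \ref{fig: closure_rulings_odd}: the thick green ruling disk, together with the orange and red/blue disks, must navigate around the negative half-twists in the left and right closures of $\La_k(\beta)$, and matching boundary labels coherently across these negative crossings forces switches at several of them. One checks this by inspection across the subcases parameterized by the initial letters of $\beta_k$ and $\beta_{k-1}$ and the parity of $m$, exactly as in the proof of Proposition~\ref{prop: max-tb_ruling}.

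Next, we perform the pinching cobordism of Figure~\ref{fig: D4-} at each switch. By the normal condition, the two ruling disks incident to each switch are either nested or disjoint, so the switches can be pinched in an order that always resolves the innermost switch first. After all pinches, each ruling disk becomes a disjoint Legendrian with exactly two cusps and no crossings, i.e., a max-tb unknot, which bounds a minimum disk. Since at least one pinch is at a negative crossing, and the pinching cobordism at a negative crossing attaches a Lagrangian 1-handle in a non-orientable fashion relative to the orientation induced on the ruling disks, the concatenated filling is non-orientable and decomposable.

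The primary technical obstacle is verifying the ordering of pinches: after partially resolving the ruling, one needs to confirm that each remaining switch still admits a contractible Reeb chord along which to pinch. This follows from the fact that resolving innermost switches preserves the nesting structure of the remaining ruling disks, which in our case decomposes into at most four explicit disks (Figures~\ref{fig: closure_rulings-2}--\ref{fig: braid_rulings}); this reduces the check to a finite combinatorial inspection.
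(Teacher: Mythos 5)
Your proposal follows essentially the same route as the paper: pinch at every switch of the normal ruling from Proposition~\ref{prop: max-tb_ruling} (with preparatory Reidemeister II moves near cusps, as in Figure~\ref{fig: D4-RII}), cap the resulting unlinked max-tb unknots with minimum disks, and deduce non-orientability from the pinches occurring at negative crossings in the closures. The only substantive difference is the ordering issue you flag at the end: rather than an innermost-first ordering, the paper scans the front left to right and handles the one problematic configuration (a thin ruling disk in a zigzag with a strand of the thick disk passing through it, unlinked but not disjoint) by resolving that thin disk's other switch first.
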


\begin{proof}
We prove the statement above by showing that the ruling constructed in the proof of Proposition \ref{prop: max-tb_ruling} is induced by a non-orientable filling. Specifically, we claim that the switches in the ruling can all be resolved via the pinching cobordism depicted in Figure~\ref{fig: D4-} or, in the presence of a cusp, by first performing a Reidemeister II move, as in Figure \ref{fig: D4-RII} and then applying the local move from Figure~\ref{fig: D4-}.

We first verify the claim for $m=-2$. In this case, recall that the ruling of $\La_{k=0}(\beta)$ is given by combining Figure~\ref{fig: closure_rulings-2} with the braid region depicted in the first column, second row of Figure~\ref{fig: braid_rulings}. To resolve the switched crossings, we start with the rightmost switch of Figure~\ref{fig: closure_rulings-2}, and resolve it via a horizontal reflection of Figures~\ref{fig: D4-RII} and \ref{fig: D4-}. This then allows us to resolve all of the switches in the braid region again starting with the rightmost, freely performing Reidemeister II moves with the orange strands as needed. After resolving all of these switches, we can perform one more Reidemeister II move to make the green and orange ruling disks disjoint, allowing us to resolve the switch between the red and orange ruling disks on the left of Figure~\ref{fig: braid_rulings}. This yields three unlinked unknots, which we can then fill in with disks to produce an exact Lagrangian filling of $\La_k(\beta)$.

Our claim can be verified for $m<-2$ by an inspection of the local pieces of rulings depicted in Figures~\ref{fig: closure_rulings_even}, \ref{fig: closure_rulings_odd}, \ref{fig: zigzag_rulings}, and \ref{fig: braid_rulings}. In particular, we see that during the construction of the ruling in the proof of Proposition \ref{prop: max-tb_ruling}, the red and blue thin ruling disks are either disjoint from the rest of the front after resolving the switches, or they can be made disjoint after performing a single Reidemeister II move to pull them away from the green strand. Note that we require gluing of zig-zag patterns exactly as specified in the proof, as otherwise we might have the green strand non-trivially linked with one of the red or blue thin disks. Likewise, when we examine the green and orange disks of the ruling, we can produce a pair of max-tb unknots by a sequence of Reidemeister II moves and crossing resolutions, as in the $m=-2$ case. In particular, the ruling patterns in Figure~\ref{fig: braid_rulings} where the first crossing of $\beta$ is switched are matched with ruling patterns from Figures~\ref{fig: closure_rulings_even} and \ref{fig: closure_rulings_odd} that force the corresponding ruling disks to be disjoint at the first crossing of $\beta$. 

To produce an exact Lagrangian filling inducing the ruling described in the proof of Proposition \ref{prop: max-tb_ruling}, we start scanning the front left to right and performing pinching cobordisms at each of the switched crossings as they appear, freely applying the isotopy from Figure \ref{fig: D4-RII} to facilitate this. The one obstruction to this particular ordering of pinching cobordisms that we encounter is when, in the zigzag patterns at the top of the front, a red ruling disk has a green strand passing through it. The red and green ruling disks are unlinked, but we cannot isotope them to be disjoint without first resolving one of the switched crossings of the red disk. When the red disk appears in a position similar to the leftmost red disk in Figure \ref{fig: example_front_ruling} or its vertical reflection, we instead perform the pinching cobordism at the other switched crossing involving the red disk. We then isotope the red disk to be disjoint from the green, and performing a pinching cobordism at the remaining switched crossing involving the red disk. When resolving the switched crossings in $\beta$, we freely perform Reidemeister II moves as before. Direct inspection of the rulings associated to the left and right closures as pictured in Figure~\ref{fig: closure_rulings_even} and \ref{fig: closure_rulings_odd} verifies that we can perform a similar process there as well.
The end result of this process is a collection of unlinked max-tb unknots, which we can fill in with the minimum cobordism pictured in Figure \ref{fig: minima_saddle} (left) to produce a non-orientable exact Lagrangian filling of $\La_k(\beta)$.
\end{proof}

\begin{center}
    \begin{figure}[h!]{ \includegraphics[width=.3\textwidth]{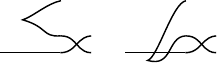}}\caption{Preparatory isotopy before performing a pinching cobordism.}
    \label{fig: D4-RII}\end{figure}
\end{center}
\bibliographystyle{alpha}
\bibliography{main.bib}

@preamble{"\def\cprime{$'$} "}

@article {FuchsIshkhanov,
    AUTHOR = {Fuchs, Dmitry and Ishkhanov, Tigran},
     TITLE = {Invariants of {L}egendrian knots and decompositions of front
              diagrams},
   JOURNAL = {Mosc. Math. J.},
  FJOURNAL = {Moscow Mathematical Journal},
    VOLUME = {4},
      YEAR = {2004},
    NUMBER = {3},
     PAGES = {707--717, 783},
      ISSN = {1609-3321,1609-4514},
   MRCLASS = {57R17 (53D40 57M27)},
  MRNUMBER = {2119145},
MRREVIEWER = {Lenhard\ L.\ Ng},
       DOI = {10.17323/1609-4514-2004-4-3-707-717},
       URL = {https://doi.org/10.17323/1609-4514-2004-4-3-707-717},
}

@misc{knotinfo,
Author = {Livingston, Charles and Moore, Allison H.},
howpublished = {URL: \url{knotinfo.org}},
Month = {July},
Title = {KnotInfo: Table of Knot Invariants},
Year = {2025},
}

@article {LipmanSabloff,
    AUTHOR = {Lipman, Erin R. and Sabloff, Joshua M.},
     TITLE = {Lagrangian fillings of {L}egendrian 4-plat knots},
   JOURNAL = {Geom. Dedicata},
  FJOURNAL = {Geometriae Dedicata},
    VOLUME = {198},
      YEAR = {2019},
     PAGES = {35--55},
      ISSN = {0046-5755,1572-9168},
   MRCLASS = {57R17},
  MRNUMBER = {3933449},
MRREVIEWER = {Vera\ Vertesi},
       DOI = {10.1007/s10711-018-0328-3},
       URL = {https://doi.org/10.1007/s10711-018-0328-3},
}

@article {Orevkov15,
    AUTHOR = {Orevkov, Stepan Yu.},
     TITLE = {Algorithmic recognition of quasipositive 4-braids of algebraic
              length three},
   JOURNAL = {Groups Complex. Cryptol.},
  FJOURNAL = {Groups. Complexity. Cryptology},
    VOLUME = {7},
      YEAR = {2015},
    NUMBER = {2},
     PAGES = {157--173},
      ISSN = {1867-1144,1869-6104},
   MRCLASS = {20F36},
  MRNUMBER = {3420365},
MRREVIEWER = {Llu\'is\ Bacardit},
       DOI = {10.1515/gcc-2015-0012},
       URL = {https://doi.org/10.1515/gcc-2015-0012},
}

@article{LipshitzNg,
author = {Robert Lipshitz and Lenhard Ng},
title = {{Torsion in Linearized Contact Homology for Legendrian Knots}},
journal = {Michigan Mathematical Journal},
publisher = {University of Michigan, Department of Mathematics},
pages = {1 -- 19},
keywords = {53D10, 53D12, 53D42},
year = {2025},
doi = {10.1307/mmj/20236467},
URL = {https://doi.org/10.1307/mmj/20236467}
}

@misc{CasalsNho25,
      title={Spectral Networks and {B}etti {L}agrangians}, 
      author={Roger Casals and Yoon Jae Nho},
      year={2025},
      eprint={2504.08144},
      archivePrefix={arXiv},
      primaryClass={math.SG},
      url={https://arxiv.org/abs/2504.08144}, 
}

@misc{atiponrat2015,
      title={An obstruction to decomposable exact {L}agrangian fillings}, 
      author={Watchareepan Atiponrat},
      year={2015},
      eprint={1512.08056},
      archivePrefix={arXiv},
      primaryClass={math.GT},
      url={https://arxiv.org/abs/1512.08056}, 
}

@incollection {Etnyre:intro,
    AUTHOR = {Etnyre, John B.},
     TITLE = {Legendrian and transversal knots},
 BOOKTITLE = {Handbook of knot theory},
     PAGES = {105--185},
 PUBLISHER = {Elsevier B. V., Amsterdam},
      YEAR = {2005},
      ISBN = {0-444-51452-X},
   MRCLASS = {57R17 (53D35 57M25 57M27)},
  MRNUMBER = {2179261},
MRREVIEWER = {Lenhard\ L.\ Ng},
       DOI = {10.1016/B978-044451452-3/50004-6},
       URL = {https://doi.org/10.1016/B978-044451452-3/50004-6},
}

@article {Orevkov04,
    AUTHOR = {Orevkov, Stepan Yu.},
     TITLE = {Quasipositivity problem for 3-braids},
   JOURNAL = {Turkish J. Math.},
  FJOURNAL = {Turkish Journal of Mathematics},
    VOLUME = {28},
      YEAR = {2004},
    NUMBER = {1},
     PAGES = {89--93},
      ISSN = {1300-0098,1303-6149},
   MRCLASS = {20F36 (57M07)},
  MRNUMBER = {2056762},
MRREVIEWER = {Luis\ Paris},
}

@article {CSHW2023,
    AUTHOR = {Capovilla-Searle, Orsola and Hughes, James and Weng, Daping},
     TITLE = {Augmentations, fillings, and clusters for 2-bridge links},
   JOURNAL = {Selecta Math. (N.S.)},
  FJOURNAL = {Selecta Mathematica. New Series},
    VOLUME = {31},
      YEAR = {2025},
    NUMBER = {5},
     PAGES = {Paper No. 102, 58},
      ISSN = {1022-1824,1420-9020},
   MRCLASS = {57K33 (13F60 53D12 57K10)},
  MRNUMBER = {4978977},
       DOI = {10.1007/s00029-025-01092-z},
       URL = {https://doi.org/10.1007/s00029-025-01092-z},
}

@article{CCPRSY,
	author={Chen, Linyi and Crider-Phillips,  Grant and Reinoso, Braeden and Sabloff, Joshua M. and Yau,  Leyu},
	title={Non-orientable {L}agrangian fillings of {L}egendrian knots},
	year={2023},
        journal={Mathematical Proceedings of the Cambridge Philosophical Society},
        volume={176(1)},
        pages={123–153},
        doi={https://doi.org/10.1017/s0305004123000440}
}

@article {Rutherford06,
    AUTHOR = {Rutherford, Dan},
     TITLE = {Thurston-{B}ennequin number, {K}auffman polynomial, and ruling
              invariants of a {L}egendrian link: the {F}uchs conjecture and
              beyond},
   JOURNAL = {Int. Math. Res. Not.},
  FJOURNAL = {International Mathematics Research Notices},
      YEAR = {2006},
     PAGES = {Art. ID 78591, 15},
      ISSN = {1073-7928},
   MRCLASS = {57M27 (57R17)},
  MRNUMBER = {2219227},
MRREVIEWER = {Justin Sawon},
       DOI = {10.1155/IMRN/2006/78591},
       URL = {https://doi.org/10.1155/IMRN/2006/78591},
}

@article {Fuchs03,
    AUTHOR = {Fuchs, Dmitry},
     TITLE = {Chekanov--{E}liashberg invariant of {L}egendrian knots:
              existence of augmentations},
   JOURNAL = {J. Geom. Phys.},
  FJOURNAL = {Journal of Geometry and Physics},
    VOLUME = {47},
      YEAR = {2003},
    NUMBER = {1},
     PAGES = {43--65},
      ISSN = {0393-0440,1879-1662},
   MRCLASS = {57M25 (57M50 57R17)},
  MRNUMBER = {1985483},
MRREVIEWER = {Vladimir\ V.\ Tchernov},
       DOI = {10.1016/S0393-0440(01)00013-4},
       URL = {https://doi.org/10.1016/S0393-0440(01)00013-4},
}

@article {Tagami19,
    AUTHOR = {Tagami, Keiji},
     TITLE = {On the {L}agrangian fillability of almost positive links},
   JOURNAL = {J. Korean Math. Soc.},
  FJOURNAL = {Journal of the Korean Mathematical Society},
    VOLUME = {56},
      YEAR = {2019},
    NUMBER = {3},
     PAGES = {789--804},
      ISSN = {0304-9914,2234-3008},
   MRCLASS = {57M25 (57R17)},
  MRNUMBER = {3949585},
MRREVIEWER = {Kyle\ Hayden},
       DOI = {10.4134/JKMS.j180399},
       URL = {https://doi.org/10.4134/JKMS.j180399},
}

@article {CasalsGao23,
    AUTHOR = {Casals, Roger and Gao, Honghao},
     TITLE = {A {L}agrangian filling for every cluster seed},
   JOURNAL = {Invent. Math.},
  FJOURNAL = {Inventiones Mathematicae},
    VOLUME = {237},
      YEAR = {2024},
    NUMBER = {2},
     PAGES = {809--868},
      ISSN = {0020-9910,1432-1297},
   MRCLASS = {53D12 (13F60 57K33)},
  MRNUMBER = {4768635},
       DOI = {10.1007/s00222-024-01268-y},
       URL = {https://doi.org/10.1007/s00222-024-01268-y},
}

@article {capovillasearle2023newton,
    AUTHOR = {Capovilla-Searle, Orsola and Casals, Roger},
     TITLE = {On {N}ewton polytopes of {L}agrangian augmentations},
   JOURNAL = {Bull. Lond. Math. Soc.},
  FJOURNAL = {Bulletin of the London Mathematical Society},
    VOLUME = {56},
      YEAR = {2024},
    NUMBER = {4},
     PAGES = {1263--1290},
      ISSN = {0024-6093,1469-2120},
   MRCLASS = {53D10 (53D15 57R17)},
  MRNUMBER = {4735617},
       DOI = {10.1112/blms.12992},
       URL = {https://doi.org/10.1112/blms.12992},
}

@article{ABL22, 
AUTHOR={An, Byung Hee and Bae, Youngjin and Lee, Eunjeong},
TITLE= {{L}agrangian fillings for {L}egendrian links of finite or affine {D}ynkin type},
JOURNAL={Quantum Topol.},
FJOURNAL={Quantum Topology},
VOLUME = {16},
YEAR = {2025},
NUMBER = {2},
PAGES ={223-342},
}

@article {CGGLSS,
    AUTHOR = {Casals, Roger and Gorsky, Eugene and Gorsky, Mikhail and Le,
              Ian and Shen, Linhui and Simental, Jos\'e},
     TITLE = {Cluster structures on braid varieties},
   JOURNAL = {J. Amer. Math. Soc.},
  FJOURNAL = {Journal of the American Mathematical Society},
    VOLUME = {38},
      YEAR = {2025},
    NUMBER = {2},
     PAGES = {369--479},
      ISSN = {0894-0347,1088-6834},
   MRCLASS = {13F60 (14M15 20F36)},
  MRNUMBER = {4868947},
       DOI = {10.1090/jams/1048},
       URL = {https://doi.org/10.1090/jams/1048},
}

@article {CasalsWeng,
    AUTHOR = {Casals, Roger and Weng, Daping},
     TITLE = {Microlocal theory of {L}egendrian links and cluster algebras},
   JOURNAL = {Geom. Topol.},
  FJOURNAL = {Geometry \& Topology},
    VOLUME = {28},
      YEAR = {2024},
    NUMBER = {2},
     PAGES = {901--1000},
      ISSN = {1465-3060,1364-0380},
   MRCLASS = {13F60 (53D12)},
  MRNUMBER = {4718130},
       DOI = {10.2140/gt.2024.28.901},
       URL = {https://doi.org/10.2140/gt.2024.28.901},
}

@article {HenryRutherford15,
    AUTHOR = {Henry, Michael B. and Rutherford, Dan},
     TITLE = {Ruling polynomials and augmentations over finite fields},
   JOURNAL = {J. Topol.},
  FJOURNAL = {Journal of Topology},
    VOLUME = {8},
      YEAR = {2015},
    NUMBER = {1},
     PAGES = {1--37},
      ISSN = {1753-8416},
   MRCLASS = {57R17 (53D42 57M27)},
  MRNUMBER = {3335247},
MRREVIEWER = {Steven Sivek},
       DOI = {10.1112/jtopol/jtu013},
       URL = {https://doi.org/10.1112/jtopol/jtu013},
}

@article {Hughes2021b,
    AUTHOR = {Hughes, James},
     TITLE = {Lagrangian fillings in {$A$}-type and their {K}\'alm\'an loop
              orbits},
   JOURNAL = {Rev. Mat. Iberoam.},
  FJOURNAL = {Revista Matem\'atica Iberoamericana},
    VOLUME = {39},
      YEAR = {2023},
    NUMBER = {5},
     PAGES = {1681--1723},
      ISSN = {0213-2230,2235-0616},
   MRCLASS = {53D12 (53D10 53D35 57K10 57K33)},
  MRNUMBER = {4651703},
MRREVIEWER = {Yu\ Pan},
       DOI = {10.4171/rmi/1436},
       URL = {https://doi.org/10.4171/rmi/1436},
}

@article {Hughes2021,
    AUTHOR = {Hughes, James},
     TITLE = {Weave-realizability for {D}--type},
   JOURNAL = {Algebr. Geom. Topol.},
  FJOURNAL = {Algebraic \& Geometric Topology},
    VOLUME = {23},
      YEAR = {2023},
    NUMBER = {6},
     PAGES = {2735--2776},
      ISSN = {1472-2747,1472-2739},
   MRCLASS = {57K33 (53D12)},
  MRNUMBER = {4640139},
       DOI = {10.2140/agt.2023.23.2735},
       URL = {https://doi.org/10.2140/agt.2023.23.2735},
}

@article {HaydenSabloff,
    AUTHOR = {Hayden, Kyle and Sabloff, Joshua M.},
     TITLE = {Positive knots and {L}agrangian fillability},
   JOURNAL = {Proc. Amer. Math. Soc.},
  FJOURNAL = {Proceedings of the American Mathematical Society},
    VOLUME = {143},
      YEAR = {2015},
    NUMBER = {4},
     PAGES = {1813--1821},
      ISSN = {0002-9939},
   MRCLASS = {57M25 (57R17)},
  MRNUMBER = {3314092},
MRREVIEWER = {Tetsuya Ito},
       DOI = {10.1090/S0002-9939-2014-12365-3},
       URL = {https://doi.org/10.1090/S0002-9939-2014-12365-3},
}

@article {CasalsNg,
    AUTHOR = {Casals, Roger and Ng, Lenhard},
     TITLE = {Braid loops with infinite monodromy on the {L}egendrian
              contact {DGA}},
   JOURNAL = {J. Topol.},
  FJOURNAL = {Journal of Topology},
    VOLUME = {15},
      YEAR = {2022},
    NUMBER = {4},
     PAGES = {1927--2016},
      ISSN = {1753-8416,1753-8424},
   MRCLASS = {53D10 (57K10 57K33)},
  MRNUMBER = {4584583},
MRREVIEWER = {Yu\ Pan},
}

@article{CasalsZaslow,
	Author = {Casals, Roger and Zaslow, Eric},
	Title = {{L}egendrian Weaves},
	JOURNAL = {Geom. Topol.},
	FJOURNAL = {Geometry and Topology},
	Year = {2021}}

@article {Leverson,
    AUTHOR = {Leverson, C.},
     TITLE = {Augmentations and rulings of {L}egendrian knots},
   JOURNAL = {J. Symplectic Geom.},
  FJOURNAL = {The Journal of Symplectic Geometry},
    VOLUME = {14},
      YEAR = {2016},
    NUMBER = {4},
     PAGES = {1089--1143},
      ISSN = {1527-5256},
   MRCLASS = {57M25 (53D42 57R17)},
  MRNUMBER = {3601885},
MRREVIEWER = {Lenhard L. Ng},
       DOI = {10.4310/JSG.2016.v14.n4.a5},
       URL = {https://doi.org/10.4310/JSG.2016.v14.n4.a5},
}

@article {EHK,
    AUTHOR = {Ekholm, Tobias and Honda, Ko and K\'{a}lm\'{a}n, Tam\'{a}s},
     TITLE = {{L}egendrian knots and exact {L}agrangian cobordisms},
   JOURNAL = {J. Eur. Math. Soc. (JEMS)},
  FJOURNAL = {Journal of the European Mathematical Society (JEMS)},
    VOLUME = {18},
      YEAR = {2016},
    NUMBER = {11},
     PAGES = {2627--2689},
      ISSN = {1435-9855},
   MRCLASS = {53D42 (53D10 53D40 57M27 57R90)},
  MRNUMBER = {3562353},
MRREVIEWER = {Georgios Dimitroglou Rizell},
       DOI = {10.4171/JEMS/650},
       URL = {https://doi.org/10.4171/JEMS/650},
}

@article {Satellite1,
	AUTHOR = {Etnyre, John and V\'{e}rtesi, Vera},
	TITLE = {{L}egendrian satellites},
	JOURNAL = {Int. Math. Res. Not. IMRN},
	FJOURNAL = {International Mathematics Research Notices. IMRN},
	YEAR = {2018},
	NUMBER = {23},
	PAGES = {7241--7304},
	ISSN = {1073-7928},
	MRCLASS = {57M25},
	MRNUMBER = {3883132},
	MRREVIEWER = {Michelle Chu},
	DOI = {10.1093/imrn/rnx106},
	URL = {https://doi.org/10.1093/imrn/rnx106},
}

@article {STWZ,
	AUTHOR = {Shende, Vivek and Treumann, David and Williams, Harold and
	Zaslow, Eric},
	TITLE = {Cluster varieties from {L}egendrian knots},
	JOURNAL = {Duke Math. J.},
	FJOURNAL = {Duke Mathematical Journal},
	VOLUME = {168},
	YEAR = {2019},
	NUMBER = {15},
	PAGES = {2801--2871},
	ISSN = {0012-7094},
	MRCLASS = {53D12 (05E99 32S60 53D30)},
	MRNUMBER = {4017516},
	DOI = {10.1215/00127094-2019-0027},
	URL = {https://doi.org/10.1215/00127094-2019-0027},
}

@article {YuPan,
	AUTHOR = {Pan, Yu},
	TITLE = {Exact {L}agrangian fillings of {L}egendrian {$(2,n)$} torus
	links},
	JOURNAL = {Pacific J. Math.},
	FJOURNAL = {Pacific Journal of Mathematics},
	VOLUME = {289},
	YEAR = {2017},
	NUMBER = {2},
	PAGES = {417--441},
	ISSN = {0030-8730},
	MRCLASS = {53D42 (57R17)},
	MRNUMBER = {3667178},
	MRREVIEWER = {Janko Latschev},
	DOI = {10.2140/pjm.2017.289.417},
	URL = {https://doi.org/10.2140/pjm.2017.289.417},
}

@article {CasalsGao,
    AUTHOR = {Casals, Roger and Gao, Honghao},
     TITLE = {Infinitely many {L}agrangian fillings},
   JOURNAL = {Ann. of Math. (2)},
  FJOURNAL = {Annals of Mathematics. Second Series},
    VOLUME = {195},
      YEAR = {2022},
    NUMBER = {1},
     PAGES = {207--249},
      ISSN = {0003-486X},
   MRCLASS = {57K33 (53D10)},
  MRNUMBER = {4358415},
       DOI = {10.4007/annals.2022.195.1.3},
       URL = {https://doi.org/10.4007/annals.2022.195.1.3},
}

@article {STZ_ConstrSheaves,
    AUTHOR = {Shende, Vivek and Treumann, David and Zaslow, Eric},
     TITLE = {{L}egendrian knots and constructible sheaves},
   JOURNAL = {Invent. Math.},
  FJOURNAL = {Inventiones Mathematicae},
    VOLUME = {207},
      YEAR = {2017},
    NUMBER = {3},
     PAGES = {1031--1133},
      ISSN = {0020-9910},
   MRCLASS = {53D37 (57M27)},
  MRNUMBER = {3608288},
MRREVIEWER = {Quach thi C\^{a}m V\^{a}n},
       DOI = {10.1007/s00222-016-0681-5},
       URL = {https://doi.org/10.1007/s00222-016-0681-5},
}

@article{BourgeoisSabloffTraynor15,
	Author = {Bourgeois, Fr\'ed\'eric and Sabloff, Joshua M. and Traynor, Lisa},
	Date-Added = {2018-09-07 19:33:28 +0000},
	Date-Modified = {2018-09-07 19:33:50 +0000},
	Doi = {10.2140/agt.2015.15.2439},
	Fjournal = {Algebraic \& Geometric Topology},
	Issn = {1472-2747},
	Journal = {Algebr. Geom. Topol.},
	Mrclass = {57R17 (53D12 53D42)},
	Mrnumber = {3402346},
	Mrreviewer = {Lenhard L. Ng},
	Number = {4},
	Pages = {2439--2477},
	Title = {{L}agrangian cobordisms via generating families: construction and geography},
	Url = {https://doi.org/10.2140/agt.2015.15.2439},
	Volume = {15},
	Year = {2015},
	Bdsk-Url-1 = {https://doi.org/10.2140/agt.2015.15.2439},
	Bdsk-Url-2 = {http://dx.doi.org/10.2140/agt.2015.15.2439}}

@article {TreumannZaslow,
	AUTHOR = {Treumann, David and Zaslow, Eric},
	TITLE = {Cubic planar graphs and {L}egendrian surface theory},
	JOURNAL = {Adv. Theor. Math. Phys.},
	FJOURNAL = {Advances in Theoretical and Mathematical Physics},
	VOLUME = {22},
	YEAR = {2018},
	NUMBER = {5},
	PAGES = {1289--1345},
	ISSN = {1095-0761},
	MRCLASS = {53D45},
	MRNUMBER = {3952351},
	DOI = {10.4310/ATMP.2018.v22.n5.a5},
	URL = {https://doi.org/10.4310/ATMP.2018.v22.n5.a5},
}

@article{Chantraine10,
	Author = {Chantraine, Baptiste},
	Date-Added = {2017-08-18 16:42:41 +0000},
	Date-Modified = {2017-08-18 16:42:49 +0000},
	Doi = {10.2140/agt.2010.10.63},
	Fjournal = {Algebraic \& Geometric Topology},
	Issn = {1472-2747},
	Journal = {Algebr. Geom. Topol.},
	Mrclass = {57R17 (53D12 57M50)},
	Mrnumber = {2580429},
	Mrreviewer = {Lenhard L. Ng},
	Number = {1},
	Pages = {63--85},
	Title = {{L}agrangian concordance of {L}egendrian knots},
	Url = {http://dx.doi.org.prx.library.gatech.edu/10.2140/agt.2010.10.63},
	Volume = {10},
	Year = {2010},
	Bdsk-Url-1 = {http://dx.doi.org.prx.library.gatech.edu/10.2140/agt.2010.10.63},
	Bdsk-Url-2 = {http://dx.doi.org/10.2140/agt.2010.10.63}}

@article{Sabloff05,
	Author = {Sabloff, Joshua M.},
	Date-Added = {2009-12-20 15:24:09 -0500},
	Date-Modified = {2009-12-20 15:25:18 -0500},
	Fjournal = {International Mathematics Research Notices},
	Issn = {1073-7928},
	Journal = {Int. Math. Res. Not.},
	Mrclass = {57M25 (53D40 57R17)},
	Mrnumber = {MR2147057 (2006e:57010)},
	Mrreviewer = {Quach thi C{\^a}m V{\^a}n},
	Number = {19},
	Pages = {1157--1180},
	Title = {Augmentations and rulings of {L}egendrian knots},
	Year = {2005}}

@article{ChekanovPushkar05,
	Author = {Pushkar{\cprime}, P. E. and Chekanov, Yu. V.},
	Date-Added = {2009-12-20 15:20:14 -0500},
	Date-Modified = {2009-12-20 15:20:29 -0500},
	Doi = {10.1070/RM2005v060n01ABEH000808},
	Fjournal = {Rossi\u\i skaya Akademiya Nauk. Moskovskoe Matematicheskoe Obshchestvo. Uspekhi Matematicheskikh Nauk},
	Issn = {0042-1316},
	Journal = {Uspekhi Mat. Nauk},
	Mrclass = {58K15 (53D10 53D12 57M25)},
	Mrnumber = {MR2145660 (2006h:58049)},
	Mrreviewer = {John B. Etnyre},
	Number = {1(361)},
	Pages = {99--154},
	Title = {Combinatorics of fronts of {L}egendrian links, and {A}rnol\cprime d's 4-conjectures},
	Url = {http://dx.doi.org/10.1070/RM2005v060n01ABEH000808},
	Volume = {60},
	Year = {2005},
	Bdsk-Url-1 = {http://dx.doi.org/10.1070/RM2005v060n01ABEH000808}}

@article{Rudolph93,
	Author = {Rudolph, Lee},
	Coden = {BAMOAD},
	Date-Added = {2009-03-25 13:11:52 -0400},
	Date-Modified = {2009-03-25 13:12:00 -0400},
	Fjournal = {American Mathematical Society. Bulletin. New Series},
	Issn = {0273-0979},
	Journal = {Bull. Amer. Math. Soc. (N.S.)},
	Mrclass = {57M25 (32S55)},
	Mrnumber = {MR1193540 (94d:57028)},
	Mrreviewer = {Charles Livingston},
	Number = {1},
	Pages = {51--59},
	Title = {Quasipositivity as an obstruction to sliceness},
	Volume = {29},
	Year = {1993}}

@article{BoileauOrevkov01,
	Author = {Boileau, Michel and Orevkov, Stepan},
	Coden = {CASMEI},
	Date-Added = {2008-07-27 14:38:52 -0400},
	Date-Modified = {2008-07-27 14:39:08 -0400},
	Fjournal = {Comptes Rendus de l'Acad\'emie des Sciences. S\'erie I. Math\'ematique},
	Issn = {0764-4442},
	Journal = {C. R. Acad. Sci. Paris S\'er. I Math.},
	Mrclass = {32Q65 (32T15 57M25)},
	Mrnumber = {MR1836094 (2002d:32039)},
	Mrreviewer = {Lee Rudolph},
	Number = {9},
	Pages = {825--830},
	Title = {Quasi-positivit\'e d'une courbe analytique dans une boule pseudo-convexe},
	Volume = {332},
	Year = {2001}}

@incollection{MR1362834,
	Address = {Basel},
	Author = {Eliashberg, Y.},
	Booktitle = {The Floer memorial volume},
	Mrclass = {57Q45 (32D10 53C23 58F05)},
	Mrnumber = {MR1362834 (96j:57023)},
	Mrreviewer = {Hansj{\"o}rg Geiges},
	Pages = {335--353},
	Publisher = {Birkh\"auser},
	Series = {Progr. Math.},
	Title = {Topology of {$2$}-knots in {${\bf R}\sp 4$} and symplectic geometry},
	Volume = {133},
	Year = {1995}}

@article{FuchsTabachnikov97,
	Author = {Fuchs, Dmitry and Tabachnikov, Serge},
	Coden = {TPLGAF},
	Fjournal = {Topology. An International Journal of Mathematics},
	Issn = {0040-9383},
	Journal = {Topology},
	Mrclass = {57M25 (58F05)},
	Mrnumber = {MR1445553 (99a:57006)},
	Number = {5},
	Pages = {1025--1053},
	Title = {Invariants of {L}egendrian and transverse knots in the standard contact space},
	Volume = {36},
	Year = {1997}}

@article {Hedden??,
    AUTHOR = {Hedden, Matthew},
     TITLE = {Notions of positivity and the {O}zsv\'ath-{S}zab\'o{}
              concordance invariant},
   JOURNAL = {J. Knot Theory Ramifications},
  FJOURNAL = {Journal of Knot Theory and its Ramifications},
    VOLUME = {19},
      YEAR = {2010},
    NUMBER = {5},
     PAGES = {617--629},
      ISSN = {0218-2165,1793-6527},
   MRCLASS = {57M27 (57M25 57N70)},
  MRNUMBER = {2646650},
       DOI = {10.1142/S0218216510008017},
       URL = {https://doi.org/10.1142/S0218216510008017},
}

\end{document}